\NeedsTeXFormat{LaTeX2e}
\documentclass[12pt]{amsart}

\usepackage{times,amsfonts,amsmath,amstext,amsbsy,amssymb,
amsopn,amsthm,upref,eucal}

\usepackage{xcolor}
\usepackage{verbatim}

\usepackage{url}

\usepackage{graphicx}

\newtheorem{theorem}{Theorem}

\newtheorem{proposition}{Proposition}
\newtheorem{corollary}{Corollary}

\newcommand{\classF}{\mathfrak{F} (m; H)}
\newcommand{\UxU}{U(\infty) \times U(\infty)}
\newcommand{\Mat}{\mathop{\mathrm{Mat}}\nolimits(\mathbb{N}, \mathbb{C})}
\newcommand{\classFNC}{\mathfrak{F} (m; \Mat)}
\newcommand{\tr}{\mathop{tr}\nolimits}
\title[Finiteness of Ergodic  Measures]{Finiteness of
Ergodic Unitarily Invariant Measures on Spaces of Infinite Matrices}
\author{Alexander I. Bufetov}
\address{The Steklov Institute of Mathematics, Moscow}
\address{The Institute for Information Transmission Problems, Moscow}  
\address{Rice University, Houston}
\date{12 August 2011}
\begin{document}

\begin{abstract}
The main result of this note, Theorem 2, is the following: a Borel measure on the space 
of infinite Hermitian matrices, that is invariant under the action of the infinite unitary group 
and that admits well-defined projections onto the quotient space 
of ``corners" of finite size, must be finite. A similar result, Theorem 1, is also established for 
unitarily invariant measures on the space of all infinite complex matrices. These results, 
combined with the ergodic decomposition theorem of [3], imply that the infinite Hua-Pickrell measures of 
Borodin and Olshanski [2] have finite ergodic components. 

The proof is based on the approach of Olshanski and Vershik [6]. First, it is shown  that 
if the sequence of orbital measures assigned to almost every point is weakly precompact, 
then our ergodic measure must indeed be finite.  The second step, which completes the proof, shows that 
if a unitarily-invariant measure admits well-defined projections onto the quotient space of finite corners, 
then for almost every point the corresponing sequence of orbital measures is indeed weakly precompact. 

\end{abstract}
\maketitle
\section{Introduction}\subsection{Statement of the main
results}\subsubsection{Unitarily invariant measures on spaces of infinite complex matrices} Let $\Mat$ be the
space of all infinite matrices whose  rows and columns are indexed by natural numbers and whose entries are
complex:
\[
\Mat \;=\; \left\{ z=(z_{ij})_{i,j \in \mathbb N}, \; z_{ij}\in \mathbb C \right\}.
\]

Let $U(\infty)$ be the infinite unitary group: an infinite matrix $u=(u_{ij})_{i,j\in {\mathbb N}}$ belongs to
$U(\infty)$ if there exists a natural number $n_0$ such that the matrix
$$
(u_{ij})_{i,j\in [1,n_0]}
$$
is unitary, while $u_{ii}=1$ if $i>n_0$ and $u_{ij}=0$ if $i\neq j$, $\max(i,j)>n_0$.

The group $U(\infty) \times U(\infty)$ acts on $\Mat$ by multiplication on both sides:
\[
T_{(u_1,u_2)}z \;=\; u_1zu_2^{-1}.
\]

Recall that a $U(\infty)\times U(\infty)$-invariant measure on $\Mat$, finite or infinite, is called
\textit{ergodic} if any $U(\infty)\times U(\infty)$-invariant Borel set either has measure zero or has complement
of measure zero. Finite ergodic $U(\infty)\times U(\infty)$-invariant measures on $\Mat$ have been classified by
Pickrell \cite{pickrell}. The first main result of this paper is that,
under natural assumptions, an ergodic $U(\infty)\times U(\infty)$-invariant measure on $\Mat$ must be finite.

Precisely, let $m\in\mathbb N$ and let $\classFNC$ denote the space of Borel measures $\nu$ on $\Mat$ such that for any $R>0$ we have
$$
\nu\left(\left\{z\in\Mat: \max_{i,j\leqslant m}|z_{ij}|<R\right\}\right)<+\infty.
$$
\begin{theorem} \label{mainz}
If a $\UxU$-invariant Borel measure from the class $\classFNC$ is ergodic then it is finite.
\end{theorem}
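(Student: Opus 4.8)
For $z \in \Mat$ and $N \ge m$ write $z^{(N)}=(z_{ij})_{i,j\le N}$ for the $N\times N$ corner, and let $\mathbf m_N(z)$ be the \emph{orbital measure}: the pushforward of normalized Haar measure on $U(N)\times U(N)$ under $(u_1,u_2)\mapsto$ (the $m\times m$ corner of $u_1 z^{(N)} u_2^{-1}$), a probability measure on $\mathbb C^{m^2}$. By the singular value decomposition $\mathbf m_N(z)$ depends on $z$ only through the singular values $\sigma_1^{(N)}(z)\ge\cdots\ge\sigma_N^{(N)}(z)$ of $z^{(N)}$; moreover, writing $\mathcal A_N$ for the $\sigma$-algebra of $(U(N)\times U(N))$-invariant Borel sets, one checks that $z\mapsto\mathbf m_N(z)(E)$ is a version of the conditional expectation of $\mathbf 1_{\{(z_{ij})_{i,j\le m}\in E\}}$ given $\mathcal A_N$, and that $\mathcal A_N$ decreases to the $\sigma$-algebra $\mathcal I$ of $\UxU$-invariant sets, which is $\nu$-trivial by ergodicity. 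The proof runs in the two stages of Olshanski and Vershik: (I) weak precompactness of $\{\mathbf m_N(z)\}_N$ for a.e.\ $z$ forces $\nu$ finite; (II) the hypothesis $\nu\in\classFNC$ forces that precompactness.

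\emph{Stage I.} Assume $\nu$ is infinite; I show $\{\mathbf m_N(z)\}_N$ is then \emph{not} weakly precompact for $\nu$-a.e.\ $z$. The class condition is exactly $\mathbf 1_{B_R}\in L^1(\nu)$ for $B_R=\{z:\max_{i,j\le m}|z_{ij}|<R\}$, and since $B_R$ depends only on the $m$-corner, $\mathbf m_N(z)(B_R)=E_\nu[\mathbf 1_{B_R}\mid\mathcal A_N](z)$. This is a reverse martingale in $N$; by the reverse martingale convergence theorem (valid for $\sigma$-finite $\nu$ and $L^1$ integrands) it converges $\nu$-a.e.\ to the $\mathcal I$-measurable $L^1$ function $E_\nu[\mathbf 1_{B_R}\mid\mathcal I]$, which by ergodicity is $\nu$-a.e.\ constant; as $\int E_\nu[\mathbf 1_{B_R}\mid\mathcal I]\,d\nu=\nu(B_R)<\infty$ while $\nu(\Mat)=\infty$, that constant is $0$. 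Since the open sets $B_R$ exhaust $\Mat$, it follows that for $\nu$-a.e.\ $z$ and every $R$ one has $\mathbf m_N(z)(B_R)\to0$; hence every weak subsequential limit of $\{\mathbf m_N(z)\}$ assigns mass $0$ to each $B_R$, so to $\mathbb C^{m^2}$, and is not a probability measure. Contrapositively, weak precompactness of $\{\mathbf m_N(z)\}_N$ for $\nu$-a.e.\ $z$ forces $\nu$ to be finite.

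\emph{Stage II.} Now $\nu\in\classFNC$ is arbitrary (ergodicity is not used here). By Prokhorov's theorem a family of probability measures on $\mathbb C^{m^2}$ is weakly precompact iff tight, and by Chebyshev's inequality tightness of $\{\mathbf m_N(z)\}_N$ follows once their second moments are bounded. A direct Haar computation gives
\[
\int_{\mathbb C^{m^2}}\|w\|_{HS}^2\,d\mathbf m_N(z)(w)=\frac{m^2}{N^2}\sum_{i,j\le N}|z_{ij}|^2=\frac{m^2}{N^2}\sum_{i=1}^N\big(\sigma_i^{(N)}(z)\big)^2,
\]
so it suffices to show $\sup_{N\ge m}N^{-2}\|z^{(N)}\|_{HS}^2<\infty$ for $\nu$-a.e.\ $z$. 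Two structural facts are available: first, the same computation yields $(N')^{-2}\|z^{(N')}\|_{HS}^2=E_\nu\big[N^{-2}\|z^{(N)}\|_{HS}^2\mid\mathcal A_{N'}\big]$ for $m\le N\le N'$, so $g_N(z):=N^{-2}\|z^{(N)}\|_{HS}^2$ obeys the reverse martingale relation along $N$; second, because $\nu\in\classFNC$ each $B_R$ has finite measure, the $B_R$ exhaust $\Mat$, $g_m$ is bounded on $B_R$, and, since $\{z:\sigma_1^{(N)}(z)<R\}\subseteq B_R$, the pushforwards of $\nu$ under the ``corner'' maps $z\mapsto\sigma^{(N)}(z)$ form a coherent system (coherence coming from $\UxU$-invariance) of $\sigma$-finite measures finite on bounded sets. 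I would combine these by running a maximal-function estimate for the reverse martingale $(g_N)$ localized along the exhaustion $\Mat=\bigcup_R B_R$, so as to stay within finite measure — the $\sigma$-finite analogue of the Vershik--Kerov/Olshanski--Vershik ergodic method applied to the coherent family $(\sigma^{(N)})_*\nu$ — which yields the required a.e.\ bound on $\sup_N g_N$ and hence the tightness; together with Stage I this gives that $\nu$ is finite.

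\emph{Main obstacle.} The delicate point is Stage II. One must extract uniform-in-$N$ control of the singular values of \emph{all} corners $z^{(N)}$ from a hypothesis — finiteness of $\nu$ on the sets $B_R$ attached to the single size $m$ — that directly constrains only the $m$-corner; the sets $B_R$ are not $\UxU$-invariant, which blocks a naive Fubini transfer, and the infinitude of $\nu$ destroys the clean $L^1$ reverse-martingale maximal inequality available in the finite-measure case. The core of the argument is therefore a quantitative use of $\UxU$-invariance together with the finite-on-bounded-sets property — that is, with the well-definedness of the projections onto finite corners — to propagate control from the $m$-corner to every larger corner simultaneously, along the lines of, but going beyond, the method of Olshanski and Vershik.
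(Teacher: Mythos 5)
Your Stage I is essentially sound in outcome (and close to the paper's Proposition \ref{weakrecimplfin}), but it is stated with an avoidable risk: when $\nu$ is ergodic and infinite, $\nu$ restricted to the limit $\sigma$-algebra $\mathcal I$ takes only the values $0$ and $\infty$, so $E_\nu[\,\cdot\mid\mathcal I]$ is not defined in the usual sense, and an almost-everywhere reverse-martingale convergence theorem for decreasing $\sigma$-algebras over an infinite measure is not a standard citable fact. You do not need it: since $\nu(B_R)<\infty$, the indicator of $B_R$ lies in $L^2(\nu)$, ergodicity (via the indecomposability result of \cite{Bufetov}) gives $\bigcap_n L^2(\nu)^{K(n)}=0$, hence the orbital averages of $\mathbf 1_{B_R}$ tend to $0$ in $L^2$ and almost surely along a subsequence, which already kills weak precompactness of the corner orbital measures. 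This is exactly how the paper argues, with a positive continuous square-integrable $f$ in place of $\mathbf 1_{B_R}$.

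The genuine gap is Stage II, i.e.\ precisely the statement of Proposition \ref{radbddz}: that for $\nu\in\classFNC$ invariant, $\nu$-almost every $z$ satisfies $\sup_N N^{-2}\tr\bigl((\Pi_{[1,N]}z)^*\Pi_{[1,N]}z\bigr)<\infty$. Your proposed route --- a maximal-function estimate for the nonnegative reverse martingale $g_N(z)=N^{-2}\|\Pi_{[1,N]}z\|_{HS}^2$ ``localized along the exhaustion $\bigcup_R B_R$'' --- is not carried out, and there is no reason it can work as stated: the sets $B_R$ are not $\UxU$-invariant, the functions $g_N$ need not be $\nu$-integrable even locally (finiteness of $\nu$ on $B_R$ controls only the $m$-corner, not $g_N$ for $N>m$), and maximal inequalities for reverse martingales require finiteness (or $\sigma$-finiteness on the limit algebra) of the underlying measure, which is exactly what is missing. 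The paper overcomes this by a different mechanism, absent from your sketch: disintegrate $\nu$ over the $m$-corner into Rohlin conditional \emph{probability} measures $\nu_{z^{(m)}}$; use uniqueness of conditional measures and invariance of $\nu$ under the subgroup of unitaries fixing the first $m$ coordinates to show that $\bigl(\Pi_{[m,\infty)}\bigr)_*\nu_{z^{(m)}}$ is $\UxU$-invariant (Proposition \ref{zinv}); since these are invariant probability measures, the finite-measure theory (Pickrell's classification together with ergodic decomposition, Proposition \ref{radbddzfin}) gives that $\Pi_{[m,\infty)}z$ is radially bounded for $\nu$-almost every $z$; finally, the first $m$ rows and columns are handled by conjugating with the permutation matrix $\check u$ swapping indices $1,\dots,m$ with $m+1,\dots,2m$ and using the inequality $\tau_n(z)\le\tau_{2m}(z)+\tau_n(\Pi_{[m,\infty)}z)+\tau_n(\Pi_{[m,\infty)}(\check u^{-1}z\check u))$ together with invariance of $\nu$ under $\check u$. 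Without some substitute for this conditional-measure and permutation argument, your Stage II remains a statement of the difficulty rather than a proof of it.
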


A measure  $\nu\in\classFNC$ is automatically sigma-finite,
clearly satisfies all assumptions of the ergodic decomposition theorem of \cite{Bufetov} and therefore admits a
decomposition into ergodic components. By definition, almost all ergodic components of  a 
measure $\nu\in\classFNC$ must themselves lie in the class $\classFNC$. 
Let ${\mathfrak M}_{erg}(\Mat)$
stand for the set of $\UxU$-invariant ergodic Borel probability measures on $\Mat$; the set ${\mathfrak
M}_{erg}(\Mat)$ is a Borel subset of the space of all Borel probability measures on $\Mat$ (see, e.g.,
\cite{Bufetov}, where the claim is proved for all measurable Borel actions of inductively compact groups). Theorem
\ref{mainz} and the ergodic decomposition theorem of \cite{Bufetov} now implies the following
\begin{corollary}\label{corz}
For any $\UxU$-invariant Borel measure 
$$
\nu\in \classFNC
$$ 
there exists a unique sigma-finite Borel measure
${\tilde\nu}$ on ${\mathfrak M}_{erg}(\Mat)$ such that
\begin{equation}
\label{ergdecfmz} \nu=\int\limits_{{\mathfrak M}_{erg}(\Mat)}\eta d{\tilde \nu}(\eta).
\end{equation}
\end{corollary}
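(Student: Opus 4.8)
The plan is to obtain Corollary \ref{corz} by feeding Theorem \ref{mainz} into the ergodic decomposition theorem of \cite{Bufetov}. As noted in the text, a measure $\nu\in\classFNC$ is sigma-finite and satisfies the hypotheses of that theorem, which therefore produces a decomposition of $\nu$ into ergodic $\UxU$-invariant components, unique in the appropriate sense. The only point separating this from the statement of the corollary is that the ergodic components furnished by \cite{Bufetov} are a priori merely sigma-finite invariant measures, whereas \eqref{ergdecfmz} integrates over the space ${\mathfrak M}_{erg}(\Mat)$ of ergodic \emph{probability} measures; bridging this gap is exactly the role of Theorem \ref{mainz}.

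As already remarked above, $\tilde\nu$-almost every ergodic component of $\nu$ again belongs to $\classFNC$: this is the Fubini/disintegration observation that for fixed $m$ and fixed $R>0$ the set $C_{m,R}=\{z\in\Mat:\max_{i,j\leqslant m}|z_{ij}|<R\}$ has $\nu(C_{m,R})<+\infty$, whence in the decomposition $\eta(C_{m,R})<+\infty$ for $\tilde\nu$-almost every component $\eta$, and intersecting over a sequence $R_k\uparrow\infty$ shows $\tilde\nu$-a.e.\ $\eta$ lies in $\classFNC$. Theorem \ref{mainz} then forces every such component to be \emph{finite}, so for $\tilde\nu$-almost every $\eta$ we may write $\eta=c(\eta)\,\widehat\eta$ with $c(\eta)=\eta(\Mat)\in[0,+\infty)$ and, when $c(\eta)>0$, $\widehat\eta\in{\mathfrak M}_{erg}(\Mat)$ a probability measure; components with $c(\eta)=0$ contribute nothing to \eqref{ergdecfmz} and are harmlessly discarded. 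The maps $\eta\mapsto c(\eta)$ and $\eta\mapsto\widehat\eta$ being Borel, the push-forward of the measure $c\,d\tilde\nu$ under $\eta\mapsto\widehat\eta$ is a Borel measure on ${\mathfrak M}_{erg}(\Mat)$, which I again denote $\tilde\nu$, and by construction it satisfies \eqref{ergdecfmz}.

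It remains to check sigma-finiteness and uniqueness of this $\tilde\nu$. Sigma-finiteness is inherited from that of $\nu$: choose a countable Borel partition $\Mat=\bigsqcup_k A_k$ with $\nu(A_k)<+\infty$; integrating \eqref{ergdecfmz} over $A_k$ gives $\int_{{\mathfrak M}_{erg}(\Mat)}\eta(A_k)\,d\tilde\nu(\eta)=\nu(A_k)<+\infty$, so each set $\{\eta:\eta(A_k)>1/j\}$ has finite $\tilde\nu$-mass, and these sets exhaust ${\mathfrak M}_{erg}(\Mat)$ as $k,j$ range over $\mathbb N$. Uniqueness follows from the uniqueness clause in \cite{Bufetov}, transported through the Borel correspondence between finite nonzero ergodic invariant measures modulo positive scaling and points of ${\mathfrak M}_{erg}(\Mat)$: any $\tilde\nu$ realizing \eqref{ergdecfmz} pulls back to a decomposition of $\nu$ of the form guaranteed by \cite{Bufetov}, and two such decompositions must coincide.

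The substance of the corollary lies entirely in Theorem \ref{mainz}; the remaining work is bookkeeping, and the step I would expect to demand the most care is pinning down the precise formulation of the ergodic decomposition theorem of \cite{Bufetov} that is being invoked --- whether its ergodic components are a priori permitted to be infinite, and on which space the decomposing measure is defined --- together with checking Borel measurability of the normalization map $\eta\mapsto(c(\eta),\widehat\eta)$ so that the Fubini and push-forward steps above are legitimate. Beyond these routine measure-theoretic verifications I anticipate no genuine obstacle.
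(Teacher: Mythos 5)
Your argument is correct and follows exactly the route the paper itself takes: feed the sigma-finite measure $\nu\in\classFNC$ into the ergodic decomposition theorem of \cite{Bufetov}, note that almost every ergodic component again lies in $\classFNC$, invoke Theorem \ref{mainz} to conclude the components are finite, and normalize to land on ${\mathfrak M}_{erg}(\Mat)$. The paper states this derivation in two sentences; your proposal merely fills in the same normalization, measurability, sigma-finiteness and uniqueness bookkeeping, so there is nothing substantively different to compare.
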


The integral in \eqref{ergdecfmz} is understood in the usual weak sense: for every Borel subset $A\subset\Mat$ we
have
$$
\nu(A)=\int\limits_{{\mathfrak M}_{erg}(\Mat)}\eta(A) d{\tilde \nu}(\eta).
$$

\subsubsection{Unitarily invariant measures on spaces of infinite
Hermitian matrices}

Now let $H\subset \Mat$ be the space of infinite Hermitian matrices:
$$
H=\{h=(h_{ij})_{i,j\in {\mathbb N}}, h_{ij}={\overline  h_{ji}}\}.
$$
The group  $U(\infty)$ naturally acts on the space $H$ by conjugation. Finite ergodic $U(\infty)$-invariant
measures on $H$ have also been classified by Pickrell \cite{pickrell} (see also Olshanski and Vershik \cite{OV}). 
An analogue of Theorem \ref{mainz} holds
in this case as well.

Precisely, a Borel  measure $\nu$ on $H$ is said to belong to the class $\mathfrak F(m, H)$ if for any $R>0$ we
have
$$
\nu(\{h\in H: \max\limits_{i\leq m,j\leq m} |h_{ij}|\leq R\})<\infty.
$$
\begin{theorem}\label{mainh}
If a  $U(\infty)$-invariant measure from the class $\mathfrak F(m, H)$ is ergodic, then it is finite.
\end{theorem}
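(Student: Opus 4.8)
I would prove Theorem~\ref{mainh} along the two-step route announced in the abstract, following the ergodic method of Olshanski and Vershik~\cite{OV} (and using the sigma-finite ergodic theory of~\cite{Bufetov}).

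\textbf{Preliminaries.} For $h\in H$ and $N\geqslant 1$ write $h_N=(h_{ij})_{i,j\leqslant N}$ for the $N$-th corner, let $H(N)$ be the space of $N\times N$ Hermitian matrices, and let $\pi_N\colon H\to H(N)$ and $p_N^n\colon H(N)\to H(n)$ ($n\leqslant N$) be the corner projections. Since $\pi_N^{-1}\{a\in H(N):\max_{i,j\leqslant N}|a_{ij}|\leqslant R\}\subseteq\{h:\max_{i,j\leqslant m}|h_{ij}|\leqslant R\}$ for $N\geqslant m$, the measure $\nu_N:=(\pi_N)_*\nu$ is Radon on $H(N)$, with
\[
\nu_N\bigl(\{a:\max_{i,j\leqslant N}|a_{ij}|\leqslant R\}\bigr)\ \leqslant\ \nu\bigl(\{h:\max_{i,j\leqslant m}|h_{ij}|\leqslant R\}\bigr)\ <\ \infty ,
\]
a bound uniform in $N\geqslant m$. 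For $N\geqslant m$ define the orbital measure $\beta_{h,N}$ as the image of Haar probability measure on $U(N)$ under $u\mapsto p_N^m(u h_N u^{*})$; each $\beta_{h,N}$ is a probability measure on $H(m)$. Let $\mathcal G_N$ be the $\sigma$-algebra of Borel subsets of $H$ invariant under conjugation by $U(N)\subset U(\infty)$; then $\mathcal G_N\supseteq\mathcal G_{N+1}$, and $\bigcap_N\mathcal G_N$ is the $\sigma$-algebra of $U(\infty)$-invariant sets, which is $\nu$-trivial by ergodicity. The restriction of $\nu$ to $\mathcal G_N$ is $\sigma$-finite (the sets $\{h:\|h_N\|\leqslant R\}\in\mathcal G_N$ exhaust $H$ and have finite $\nu$-measure by the display above), so conditional expectations given $\mathcal G_N$ make sense, and a direct computation using the $U(\infty)$-invariance of $\nu$ and the invariance of Haar measure shows that for every bounded Borel $f$ on $H(m)$,
\[
\beta_{h,N}(f)\ =\ \mathbb E_\nu\bigl[f\circ\pi_m\mid\mathcal G_N\bigr](h)\qquad\text{for $\nu$-a.e.\ }h .
\]
In particular $\bigl(\beta_{h,N}(f)\bigr)_{N\geqslant m}$ is a reverse martingale, and $\int_H\beta_{h,N}(f)\,d\nu=\nu_m(f)$ for every $N$ whenever $f\circ\pi_m\in L^{1}(\nu)$ --- which holds for every $f\in C_c(H(m))$, since $\{h:\|h_m\|\leqslant R\}\subseteq\{h:\max_{i,j\leqslant m}|h_{ij}|\leqslant R\}$ has finite $\nu$-measure.

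\textbf{Step 1: precompactness of $\{\beta_{h,N}\}$ forces $\nu$ finite.} Assume that for $\nu$-a.e.\ $h$ the family $\{\beta_{h,N}\}_{N\geqslant m}$ is tight on $H(m)$. For $f\in C_c(H(m))$ the reverse martingale $\mathbb E_\nu[f\circ\pi_m\mid\mathcal G_N]$ is $L^1(\nu)$-bounded and uniformly integrable (its level sets have $\nu$-measure $O(1/t)$ uniformly in $N$, and $f\circ\pi_m\in L^{1}(\nu)$), hence converges $\nu$-a.e.; by ergodicity the limit is a $\nu$-a.e.\ constant $c_f$. Since a countable dense set of such $f$ is separating, tightness forces $\beta_{h,N}$ to converge weakly, for a.e.\ $h$, to one and the same \emph{probability} measure $\mu$ on $H(m)$, with $\mu(f)=c_f$. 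Choose $f\in C_c(H(m))$ with $f\geqslant 0$ and $\mu(f)>0$. By Fatou's lemma,
\[
\nu_m(f)=\liminf_{N\to\infty}\int_H\beta_{h,N}(f)\,d\nu(h)\ \geqslant\ \int_H\liminf_{N\to\infty}\beta_{h,N}(f)\,d\nu(h)=\mu(f)\,\nu(H).
\]
Since $\nu_m(f)<\infty$ because $\nu\in\mathfrak F(m,H)$, we conclude $\nu(H)<\infty$. (Conversely, if $\nu(H)=\infty$, then the constant $c_f$, being in $L^1(\nu)$ by $L^1$-convergence of the reverse martingale, must vanish for every $f\in C_c$, so $\beta_{h,N}$ escapes to infinity; thus tightness for a.e.\ $h$ is in fact \emph{equivalent} to finiteness of $\nu$.)

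\textbf{Step 2: $\nu\in\mathfrak F(m,H)$ forces precompactness of $\{\beta_{h,N}\}$ for a.e.\ $h$.} This is the substantive part. The event $\{h:\{\beta_{h,N}\}_{N\geqslant m}\text{ is tight}\}$ is $U(\infty)$-invariant --- for $u\in U(\infty)$, the corners of $uhu^{*}$ and of $h$ have the same $U(N)$-orbit, hence the same $\beta_{h,N}$, for all but finitely many $N$ --- so by ergodicity it suffices to prove it has positive $\nu$-measure. To bound $\beta_{h,N}\bigl(\{a\in H(m):\|a\|>R\}\bigr)$ uniformly in $N$ I would compute the second moment
\[
\int_{H(m)}\|a\|^{2}\,d\beta_{h,N}(a)=\int_{U(N)}\bigl\|p_N^m(u h_N u^{*})\bigr\|^{2}\,du ,
\]
which, by the standard Weingarten formulas for moments of a Haar-conjugated matrix, equals an explicit $N$-dependent linear combination of the conjugation invariants $N^{-2}(\tr h_N)^{2}$ and $N^{-2}\tr(h_N^{2})$ of $h_N$, both coefficients being $O_m(N^{-2})$; Chebyshev's inequality then reduces tightness of $\{\beta_{h,N}\}$ to the $\nu$-a.e.\ boundedness in $N$ of $N^{-1}|\tr h_N|$ and of $N^{-2}\tr(h_N^{2})$. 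Now $N\mapsto N^{-1}\tr h_N$ is a reverse martingale for $(\mathcal G_N)$ (one checks $\mathbb E_\nu[\tr h_N\mid\mathcal G_{N+1}]=\tfrac{N}{N+1}\tr h_{N+1}$), and $V_N:=\tfrac{1}{N-1}\bigl(\tr(h_N^{2})-\tfrac1N(\tr h_N)^{2}\bigr)$, the empirical variance of the eigenvalues of $h_N$, is a \emph{nonnegative} reverse supermartingale (the eigenvalues of $h_N$ interlace those of $h_{N+1}$, so their spread does not increase in the mean); since $N^{-2}\tr(h_N^{2})=\tfrac{N-1}{N^{2}}V_N+N^{-3}(\tr h_N)^{2}$, controlling $V_N$ and $N^{-1}\tr h_N$ controls $N^{-2}\tr(h_N^{2})$ as well. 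Reverse (super)martingale convergence then yields the required a.e.\ boundedness; to carry this out under the infinite measure $\nu$ --- where $\tr h_m$ and $\tr(h_m^{2})$ need not be $\nu$-integrable --- one localizes to the finite-$\nu$-measure level sets $\{h:\max_{i,j\leqslant m}|h_{ij}|\leqslant R\}$ supplied by $\nu\in\mathfrak F(m,H)$, which is exactly where that hypothesis enters.

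\textbf{Main obstacle.} Step~1 is essentially formal once the orbital measures are recognized as reverse martingale conditional expectations against a $\nu$-trivial tail. The difficulty is concentrated in Step~2: pinning down the precise moment identity and the (super)martingale properties for the orbital measures, and --- most delicately --- running the convergence arguments for the infinite measure $\nu$ without the customary $L^{1}$ hypotheses, which forces a careful use of the localization afforded by membership in $\mathfrak F(m,H)$ (and, in its place, of the sigma-finite ergodic apparatus of~\cite{Bufetov}). Apart from this, the Hermitian case runs entirely parallel to the proof of Theorem~\ref{mainz}.
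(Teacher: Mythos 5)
Your Step 1 is essentially sound and is the same idea as the paper's Proposition \ref{weakrecimplfin}, though the paper runs it through the $L_2$ projections onto $K(n)$-invariant functions (where the triviality of $\bigcap_n L_2(X,\nu)^{K(n)}$ for an infinite ergodic $\nu$ is immediate from \cite{Bufetov}), which avoids any delicacy about almost-sure convergence of reverse martingales under an infinite measure. Likewise your reduction of tightness to the boundedness of $N^{-1}|\tr h_N|$ and $N^{-2}\tr (h_N^2)$ is exactly the paper's notion of radial boundedness and its Proposition \ref{radbddprecomph} (proved there via Theorem 4.1 of Olshanski--Vershik rather than a Chebyshev bound).

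The genuine gap is in Step 2, which is where the substance of the theorem lies. Your argument needs the reverse (super)martingale identities $\mathbb E_\nu[\tr h_N\mid\mathcal G_{N+1}]=\tfrac{N}{N+1}\tr h_{N+1}$ and the analogous inequality for the eigenvalue variance, together with a convergence theorem; but under the (a priori infinite) measure $\nu$ the functions $\tr h_N$ and $\tr(h_N^2)$ need not be integrable, so neither the identities nor Doob-type convergence are available, as you note. The proposed repair --- localizing to the finite-measure sets $\{h:\max_{i,j\le m}|h_{ij}|\le R\}$ --- does not work: these sets are not invariant under conjugation by $U(N)$ for $N>m$ (they are not $\mathcal G_N$-measurable), so restricting $\nu$ to them destroys exactly the invariance that makes the orbital average a conditional expectation, and the reverse-martingale structure disappears. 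Nor can one localize to $U(\infty)$-invariant sets: by ergodicity an invariant set of finite positive measure would already force $\nu$ to be finite, which is what is to be proved. So the hypothesis $\nu\in\mathfrak F(m,H)$ has to be used structurally, not merely as a source of finite-measure level sets, and this is what the paper's proof does: membership in $\mathfrak F(m,H)$ yields the Rohlin decomposition $\nu=\int_{H(m)}\nu_{h^{(m)}}d\overline\nu(h^{(m)})$ over the $m\times m$ corner; uniqueness of conditional measures and the commuting copy $U_m(\infty)\cong U(\infty)$ show that each $(\Pi_{[m,\infty)})_*\nu_{h^{(m)}}$ is a \emph{finite} $U(\infty)$-invariant measure (Proposition \ref{uinv}); the Olshanski--Vershik theorem together with the Borodin--Olshanski ergodic decomposition then gives radial boundedness of $\Pi_{[m,\infty)}h$ for $\nu$-almost every $h$ (Proposition \ref{finradbdd}, Corollary \ref{projradbdd}); finally, the permutation matrix $\check u$ of (\ref{uhatdef}) and the $U(\infty)$-invariance of $\nu$ transfer radial boundedness from $\Pi_{[m,\infty)}h$ and $\Pi_{[m,\infty)}(\check u^{-1}h\check u)$ to $h$ itself (Proposition \ref{radest}). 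Your sketch contains no analogue of the conditional-measure step or of this permutation trick, and without them the passage from $\nu\in\mathfrak F(m,H)$ to almost-sure radial boundedness is unproved.
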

As before, let ${\mathfrak M}_{erg}(H)$ stand for the set of $U(\infty)$-invariant ergodic Borel probability
measures on $H$; the set ${\mathfrak M}_{erg}(H)$ is a Borel subset of the space of all Borel probability measures
on $H$. Theorem \ref{mainh} now implies
\begin{corollary}\label{corh}
For any $U(\infty)$-invariant Borel measure $\nu\in \mathfrak F(m, H)$ there exists a unique sigma-finite Borel
measure ${\tilde\nu}$ on ${\mathfrak M}_{erg}(H)$ such that
\begin{equation}
\label{ergdecfmh} \nu=\int\limits_{{\mathfrak M}_{erg}(H)}\eta d{\tilde \nu}(\eta).
\end{equation}
\end{corollary}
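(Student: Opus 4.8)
The plan is to obtain Corollary~\ref{corh} as a formal consequence of Theorem~\ref{mainh} together with the ergodic decomposition theorem of~\cite{Bufetov}, repeating verbatim the argument already indicated for Corollary~\ref{corz}. First I would note that every $\nu\in\mathfrak{F}(m,H)$ is sigma-finite, since the sets $H_R=\{h\in H:\max_{i\le m,\,j\le m}|h_{ij}|\le R\}$ increase to $H$ as $R\to\infty$ and each carries finite $\nu$-mass by definition of the class. Consequently $\nu$ satisfies all the hypotheses of the ergodic decomposition theorem of~\cite{Bufetov} for measurable Borel actions of inductively compact groups, which produces a sigma-finite measure $\tilde\nu_0$, carried on the set of all $U(\infty)$-invariant ergodic Borel measures on $H$ (a priori including infinite ones), such that $\nu(A)=\int\eta(A)\,d\tilde\nu_0(\eta)$ for every Borel set $A\subset H$.

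The next step is to upgrade this to a decomposition into \emph{finite} ergodic measures. Fixing $R$ and applying the displayed identity with $A=H_R$ gives $\int\eta(H_R)\,d\tilde\nu_0(\eta)=\nu(H_R)<\infty$, so $\eta(H_R)<\infty$ for $\tilde\nu_0$-almost every $\eta$; intersecting the corresponding full-measure sets over $R\in\mathbb N$ shows that $\tilde\nu_0$-almost every ergodic component $\eta$ again lies in $\mathfrak{F}(m,H)$. By Theorem~\ref{mainh}, every such $\eta$ is then finite. I would then normalize: for $\tilde\nu_0$-a.e.\ $\eta$ set $c(\eta)=\eta(H)\in(0,\infty)$ and $\bar\eta=\eta/c(\eta)\in\mathfrak{M}_{erg}(H)$, define $\tilde\nu$ as the pushforward of the measure $c\,d\tilde\nu_0$ under the map $\eta\mapsto\bar\eta$, and check, using that $\mathfrak{M}_{erg}(H)$ is a Borel set (as recalled above) and that $\eta\mapsto c(\eta)$ and $\eta\mapsto\bar\eta$ are Borel maps, that $\tilde\nu$ is sigma-finite and that $\nu=\int_{\mathfrak{M}_{erg}(H)}\eta\,d\tilde\nu(\eta)$ in the weak sense of~\eqref{ergdecfmh}.

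Finally, uniqueness of $\tilde\nu$ would follow from the uniqueness clause of the ergodic decomposition theorem of~\cite{Bufetov}, combined with the bijective Borel correspondence between finite ergodic measures and pairs consisting of a normalization constant and an ergodic probability measure. The only genuine input at this stage is Theorem~\ref{mainh}; everything else is the routine bookkeeping of passing between sigma-finite ergodic components and their normalizations, so I do not anticipate a real obstacle here — the substance of the note lies entirely in the proof of Theorem~\ref{mainh} itself, via the two-step strategy (weak precompactness of orbital measures $\Rightarrow$ finiteness, and the class $\mathfrak{F}(m,H)$ $\Rightarrow$ weak precompactness) described in the abstract.
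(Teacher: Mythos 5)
Your proposal is correct and follows essentially the same route as the paper, which obtains Corollary~\ref{corh} (just as Corollary~\ref{corz}) by noting sigma-finiteness, invoking the ergodic decomposition theorem of~\cite{Bufetov}, observing that almost every ergodic component again lies in $\mathfrak F(m,H)$, and then applying Theorem~\ref{mainh} to conclude finiteness of the components. Your normalization and uniqueness bookkeeping only makes explicit what the paper leaves implicit.
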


The integral in \eqref{ergdecfmh} is again understood in the  weak sense.

One expects similar results to hold for all the $10$ series of homogeneous spaces (see, e,.g., \cite{O1, O2}).

\subsubsection{Infinite Hua-Pickrell measures}

A natural example of measures lying in the class ${\mathfrak F}(m,H)$ is given by infinite Hua-Pickrell measures
introduced by Borodin and Olshanski \cite{BO}, Section 8, Subsection ``Infinite measures''. In fact, for any $m\in
{\mathbb N}$, Borodin and Olshanski give explicit examples of measures lying in the class ${\mathfrak F}(m,H)$ but not in
the class ${\mathfrak F}(m-1,H)$. Starting from the Pickrell measures \cite{pickrell3}, a similar construction can
be carried out to obtain infinite $\UxU$-invariant measures on $\Mat$ lying in the class ${\mathfrak F}(m,\Mat)$
but not in the class ${\mathfrak F}(m-1,\Mat)$ for any $m\in {\mathbb N}$. Corollaries \ref{corz}, \ref{corh} show
now that ergodic components of infinite Hua-Pickrell measures are finite.

\subsection{Outline of the proofs of Theorems \ref{mainz}, \ref{mainh}.}

Olshanski and Vershik \cite{OV} gave a completely different proof for Pickrell's Classification Theorem of 
$U(\infty)$-invariant ergodic measures on $H$, and their method has been adapted to 
ergodic $\UxU$-invariant measures on $\Mat$ by Rabaoui \cite{rabaoui1}, \cite{rabaoui2}.
The proof of Theorems \ref{mainz}, \ref{mainh} is  based on the Olshanski-Vershik approach.

First, following Vershik \cite{V1}, to each infinite matrix we assign its sequence of {\it orbital measures}
obtained by averaging over exhausting sequences of compact subgroups in our infinite-dimensional unitary groups. A
simple general argument shows that precompactness of the family of orbital measures for almost all points implies
finiteness of an ergodic measure. Using the work of Olshanski and Vershik \cite{OV} and Rabaoui
\cite{rabaoui1}, \cite{rabaoui2}, we give a sufficient condition, called ``radial boundedness'' of a matrix, for weak
precompactness of its family of orbital measures: namely, it is shown that the sequence of orbital measures is
weakly precompact as soon as the norms   (and, in case of $H$, also the traces) 
of  $n\times n$ ``corners'' of our matrix do not grow too fast as
$n\to\infty$. To complete the proof of Theorem \ref{mainh}, it remains to show that with respect to any measure in the class ${\mathfrak F}(m, H)$, almost all
matrices are indeed radially bounded (the same statement, with the same proof, also holds for $\classFNC$). This
is done in two steps: first, it is shown that if a measure from the class ${\mathfrak F}(m, H)$ is
$U(\infty)$-invariant, then its suitably averaged conditional measures yield a {\it finite} $U(\infty)$-invariant
measure --- with respect to which almost all points must then be radially bounded; second, applying a finite
permutation of columns and rows, one deduces radial boundedness for the initial matrix and completes the proof.

\subsection{Projections and conditional measures}

For $n\in \mathbb N$, let ${\rm Mat}(n, {\mathbb C})$ be the space of all $n\times n$ complex matrices.

Introduce a map 
\[
\textstyle \Pi_{[1,n]}: \Mat\rightarrow {\rm Mat}(n, {\mathbb C})
\]
by the formula
\[
\textstyle \Pi_{[1,n]}z \;=\; (z_{ij})_{i,j=1,\ldots,n}, \ z \in \Mat.
\]
If a measure $\nu$ on $\Mat$ is infinite, then the projection $\left(\Pi_{[1,n]}\right)_*\nu$ may fail to be
well-defined. The class $\classFNC$ consists precisely of those measures $\nu$ for which the projection
$\left(\Pi_{[1,m]}\right)_*\nu$ (and, consequently, all projections $\left(\Pi_{[1,n]}\right)_*\nu$ for
$n\geqslant m$) are indeed well-defined. Equivalently, by Rohlin's Theorem on existence of conditional measures, a
measure $\nu$ belongs to the class $\classFNC$ if and only if:
\begin{enumerate}
\item there exists a measure
$\overline{\nu}$ on the space ${\rm Mat}(m, {\mathbb C})$ assigning finite weight
to every compact set;
\item for
$\overline{\nu}$-almost every $z^{(m)}\in {\rm Mat}(m, {\mathbb C})$ there exists a  Borel probability measure
$\nu_{z^{(m)}}$ on $\Mat$ supported on the set $\left(\Pi_{[1,m]}\right)^{-1}z^{(m)}$ such that for every Borel
subset $A\subset\Mat$ the map
$$
z^{(m)}\to\nu_{z^{(m)}}(A)
$$
is $\overline{\nu}$-measurable and that we have a decomposition
\begin{equation}
\label{ergdecz}  \nu=\int\limits_{{\rm Mat}(m,\mathbb{C})} \nu_{z^{(m)}}\,d\overline{\nu}\left(z^{(m)}\right)
\end{equation}
again understood in the weak sense.
%that for every Borel subset $A\subset\Mat$
%we have
%$$
%\nu(A)=\int
%\nu_{z^{(m)}}(A)\,d\overline{\nu}\left(z^{(m)}\right).
%$$
\end{enumerate}
A similar description can be given for measures in the class $\classF$: a Borel measure $\nu$ on $H$ belongs to
the class $\classF$ if and only if there exists a measure $\overline{\nu}$ on the space $H(m)$ of $m\times
m$-Hermitian matrices which assigns finite weight to every compact set and, for $\overline{\nu}$-almost every
$h^{(m)}\in H(m)$ there exists a Borel probability measure $\nu_{h^{(m)}}$ such that
\begin{equation}
\label{ergdech} \nu=\int_{H(m)}\nu_{h^{(m)}}\,d\overline{\nu}\left(h^{(m)}\right),
\end{equation}
where the decomposition (\ref{ergdech}) is understood in the same way as the decomposition (\ref{ergdecz}).
%before
%: for every Borel subset $A\subset H$ the map
%$h^{(m)}\to\nu_{h^{(m)}}(A)$ is Borel measurable and
%$$
%\nu(A)=\int_{H(m)}\nu_{h^{(m)}}(A)\,d\overline{\nu}\left(h^{(m)}\right).
%$$

%\subsection{Outline of the proof}

{\bf Acknowledgements.} Grigori Olshanski posed the problem to me, and I am greatly indebted to him. I am deeply
grateful to Sevak Mkrtchyan and Konstantin Tolmachov for helpful discussions. I am deeply grateful to Lisa
Rebrova, Nikita Kozin and Nikita Medyankin for typesetting parts of the manuscript.

Part of this work was done when I was visiting Victoria University Wellington, the Joint Institute for
Nuclear Research in Dubna and Kungliga Tekniska H{\"o}gskolan in Stockholm. I am deeply grateful to these institutions 
for their warm hospitality.

This work was supported in part by an Alfred P. Sloan Research Fellowship, by the Grant MK-4893.2010.1 of the
President of the Russian Federation, by the Programme on Mathematical Control Theory of the Presidium of the
Russian Academy of Sciences, by the Programme 2.1.1/5328 of the Russian Ministry of Education and Research, by the
RFBR-CNRS grant 10-01-93115,  by the Edgar Odell Lovett Fund at Rice University and by the National Science
Foundation under grant DMS~0604386.

\section{Weak recurrence}
The proof is based on the following simple general observation. Let $X$ be a complete metric space, and let $G$ be
an inductively compact group, in other words,
$$
G=\bigcup\limits_{n=1}^{\infty}K(n), \ \ K(n)\subset K(n+1)
$$
where the groups $K(n)$, $n\in\mathbb{N}$, are compact and metrizable. Let $\mathfrak{T}$ be a continuous action
of $G$ on $X$ (continuity is here understood with respect to the totality of the variables).

Each group $K(n)$ is endowed with the Haar measure $\mu_{K(n)}$, and to each point $x\in X$ we assign, following
Vershik \cite{V1}, the corresponding sequence of \emph{orbital measures} $\mu_{K(n)}^x$ on $X$ given by the
formula
$$
\int_X f(y)\,d\mu_{K(n)}^x(y)=\int_{K(n)}f\left(T_gx\right)\mu_{K(n)}(g),
$$
valid for any bounded continuous function $f$ on $X$. Given a family ${\mathfrak A}$  of Borel probability
measures on $X$, we say that the family $\mathfrak A$ is \emph{weakly recurrent} if for any positive bounded
continuous function $f$ on $X$ we have
$$
\inf_{\nu\in {\mathfrak A}}\int f\,d\nu>0.
$$

\begin{proposition}
\label{weakrecimplfin} Let $\nu$ be an ergodic $\mathfrak{T}$-invariant measure on $X$ that assigns finite weight
to every ball and admits a set $B$, $\nu(B)>0$, such that for every $x\in B$ the sequence of orbital measures
$\mu_{K(n)}^x$ is weakly recurrent. Then $\nu$ is finite.
\end{proposition}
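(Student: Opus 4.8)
The plan is to exploit the ergodicity of $\nu$ via a Poincar\'e-recurrence-type argument, using the weakly recurrent orbital measures to prevent mass from escaping to infinity. First I would set up the basic dichotomy: since $\nu$ is ergodic and assigns finite weight to every ball, either $\nu$ is finite or, for every $R>0$ and every ball $B_R$ of radius $R$ about a fixed basepoint, the ``return frequency'' to $B_R$ must vanish in a suitable averaged sense. The orbital measures $\mu^x_{K(n)}$ are $\mathfrak T$-invariant averages supported on the $G$-orbit closure of $x$, so they provide a natural bridge between the dynamics and the invariant measure. The key identity I would use is that for a positive bounded continuous $f$,
$$
\int_X \left( \int_X f\, d\mu^x_{K(n)} \right) d\nu(x) = \int_X f\, d\nu,
$$
which holds because $\mu_{K(n)}$ is a probability measure and $\nu$ is $K(n)$-invariant; this is valid even for infinite $\nu$ as long as $f$ is $\nu$-integrable, which we can arrange by taking $f$ supported near $B_R$.

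Next I would argue by contradiction: suppose $\nu(X) = \infty$. Choose $f$ to be a positive bounded continuous function that is, say, $\le 1$ everywhere, equal to $1$ on $B_R$, and supported in $B_{R+1}$; then $\int f\, d\nu \le \nu(B_{R+1}) < \infty$. On the set $B$ with $\nu(B)>0$, weak recurrence gives a constant $c = c(f) > 0$ (depending on $f$ but not on $n$) with $\int f\, d\mu^x_{K(n)} \ge c$ for all $x \in B$ and all $n$. Hence
$$
\int_X f\, d\nu \;=\; \int_X \left( \int_X f\, d\mu^x_{K(n)} \right) d\nu(x) \;\ge\; \int_B \left( \int_X f\, d\mu^x_{K(n)} \right) d\nu(x) \;\ge\; c\,\nu(B).
$$
This by itself only bounds $\int f\, d\nu$ from below, which is consistent with finiteness, so the contradiction must come from a sharper use of ergodicity: I would iterate this over a nested exhaustion or, more cleanly, replace $B$ by $G$-translates. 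The crucial observation is that $\nu$-almost every point eventually enters $B$ under the dynamics (by ergodicity, the $G$-saturation of $B$ has full measure), and more quantitatively one shows that if $\nu$ were infinite then the ``time average'' $\frac{1}{N}\sum_{n=1}^N \mathbf 1_B(T_g x)$ tends to $0$ in an appropriate sense on a large set, contradicting the uniform lower bound $\int f\, d\mu^x_{K(n)} \ge c$ forced on the recurrent set. To make this precise I would use a maximal-inequality or a Chacon--Ornstein type argument for the infinite-measure-preserving action, or else reduce to the finite case by the following device: restrict attention to the conditional measures of $\nu$ on level sets and use that the orbital measures are genuine probability measures.

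The cleanest route, and the one I would actually carry out, is: set $\phi(x) = \liminf_n \int_X f\, d\mu^x_{K(n)}$, which is a $G$-invariant (indeed $K(n)$-invariant for each $n$ up to the tail) nonnegative Borel function; by ergodicity $\phi$ is $\nu$-a.e.\ constant, equal to some $c_0 \ge 0$. On $B$ we have $\phi \ge c > 0$, and $\nu(B)>0$, so $c_0 > 0$. But then by Fatou's lemma applied to the identity above along a subsequence,
$$
\nu(B_{R+1}) \;\ge\; \int_X f\, d\nu \;=\; \liminf_n \int_X \left(\int_X f\, d\mu^x_{K(n)}\right) d\nu(x) \;\ge\; \int_X \phi\, d\nu \;=\; c_0 \cdot \nu(X),
$$
so $\nu(X) \le \nu(B_{R+1})/c_0 < \infty$. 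The main obstacle is justifying that $\phi$ is genuinely $G$-invariant (the orbital measures $\mu^x_{K(n)}$ depend on $x$, and their limit behaviour must be shown $\mathfrak T$-invariant modulo $\nu$-null sets — this requires care because $G$ is only inductively compact and the convergence is along $n$), and, relatedly, justifying the exchange of $\liminf$ and integral, which needs the domination $\int f\, d\mu^x_{K(n)} \le \|f\|_\infty$ together with $\nu$-integrability of the constant $\|f\|_\infty$ over the relevant region — here one must be slightly careful since $\nu(X)=\infty$, so one works with $f$ of bounded support and invokes $K(n)$-invariance to keep everything inside a fixed ball up to controllable error, or applies the reverse Fatou lemma on a large ball. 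Once the $G$-invariance of $\phi$ and the integral exchange are secured, ergodicity does the rest and the finiteness of $\nu$ follows immediately.
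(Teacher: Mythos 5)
The skeleton of your final (``cleanest route'') argument is sound, and the obstacle you flag as the main one is in fact harmless: for $h\in K(m)$ and all $n\ge m$, bi-invariance of Haar measure on the compact group $K(n)$ gives $\mu_{K(n)}^{T_hx}=\mu_{K(n)}^{x}$, so $\phi(x)=\liminf_n\int f\,d\mu_{K(n)}^x$ is \emph{exactly} $G$-invariant, its level sets are genuinely invariant Borel sets, and the ergodicity hypothesis as stated suffices; moreover no domination is needed, since ordinary Fatou for the nonnegative functions $g_n(x)=\int f\,d\mu_{K(n)}^x$ already gives $\int\phi\,d\nu\le\liminf_n\int g_n\,d\nu=\int f\,d\nu$. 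The genuine gap is your choice of test function. Weak recurrence, as defined in the paper, constrains only \emph{strictly positive} bounded continuous $f$; your $f$ vanishes outside $B_{R+1}$, so the hypothesis gives no lower bound on $\int f\,d\mu_{K(n)}^x$, and the crucial step ``$\phi\ge c>0$ on $B$'' is unjustified --- indeed it can fail: a sequence of Dirac measures $\delta_{x_n}$ with the $x_n$ staying inside a compact set disjoint from $B_{R+1}$ is weakly recurrent (any strictly positive continuous $f$ has positive infimum on a compact set), yet $\int f\,d\delta_{x_n}=0$ for your $f$. The uniform constant $c=c(f)$ over $x\in B$ that you invoke is likewise not part of the hypothesis, which is pointwise in $x$; this slip is harmless, since your argument only needs $\phi(x)>0$ for each $x\in B$ together with $\nu(B)>0$, but the positivity itself must come from a strictly positive $f$.

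The missing idea is precisely where the assumption that $\nu$ assigns finite weight to every ball enters: it allows one to construct a test function that is simultaneously strictly positive, bounded, continuous and $\nu$-integrable, e.g.\ $f(x)=\psi(d(x_0,x))$ with $\psi>0$ continuous and decaying fast enough relative to the growth of $\nu$ on balls --- exactly the function used in the paper. With this $f$, weak recurrence at each $x\in B$ does give $\phi(x)>0$, hence the a.e.\ constant value $c_0$ of $\phi$ is positive, and your Fatou computation yields $c_0\,\nu(X)\le\int f\,d\nu<\infty$, so $\nu$ is finite. Once repaired in this way, your route is a correct alternative to the paper's: the paper argues in $L_2(X,\nu)$, showing that for an infinite ergodic $\nu$ one has $\bigcap_n L_2(X,\nu)^{K(n)}=L_2(X,\nu)^G=0$ (this uses the indecomposability statement from \cite{Bufetov}), so that $P_nf\to0$ and weak recurrence fails almost everywhere; your pointwise $\liminf$/Fatou argument avoids both the Hilbert-space projections and the appeal to indecomposability, because your $\phi$ is exactly invariant rather than invariant only up to null sets.
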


\begin{proof} Consider the space $L_2(X, \nu)$; for $n\in\mathbb{N}$,
let $L_2(X, \nu)^{K(n)}$ be the subspace of $K(n)$-invariant functions, and let $P_n:L_2(X, \nu)\to L_2(X,
\nu)^{K(n)}$ be the corresponding orthogonal projection.

If the measure $\nu$ is ergodic and infinite, then
\begin{equation}
\label{emptyinter} \bigcap\limits_{n=1}^{\infty} L_2(X,\nu)^{K(n)}=0.
\end{equation}

Indeed, let $L_2(X, \nu)^{G}$ be the subspace of $G$-invariant square-integrable functions. By definition, we have
\begin{equation}
\bigcap\limits_{n=1}^{\infty} L_2(X,\nu)^{K(n)}=L_2(X, \nu)^{G}.
\end{equation}

Now, if the measure $\nu$ is ergodic and assigns finite weight to every ball, then, by results of \cite{Bufetov},
it is also indecomposable in the sense that any Borel set $A\subset X$ such that for any $g\in G$ we have
$\nu(T_gA\Delta A)=0$ must satisfy either $\nu(A)=0$ or $\nu(X\setminus A)=0$. It follows that $L_2(X,
\nu)^{G}=0$, and \eqref{emptyinter} is proved.

For any $f\in L_2(X,\nu)$ we thus have $P_nf\to 0$ in $L_2(X,\nu)$ as $n\to\infty$. Along a subsequence we then
also have $P_{n_k}f\to 0$ almost surely with the respect to the measure $\nu$.

If $f$ is continuous and square-integrable, then the equality
$$
P_n f(x)=\int\limits_X f(y)\,d\mu_{K(n)}^x(y)
$$
holds for $\nu$-almost all $x$.

Take, therefore, $f$ to be a positive, continuous, square-integrable function on $X$ (the existence of such a
function follows from the fact that the measure $\nu$ assigns finite weight to balls: indeed, taking $x_0\in X$,
letting $d$ be the distance on $X$, and setting $f(x)=\psi(d(x_0,x))$, where $\psi:\mathbb{R}\to\mathbb{R}$ is
positive, continuous, and decaying rapidly enough at infinity, we obtain the desired function).

If $\nu$ is ergodic and infinite, then, from the above, for almost all $x\in X$ we have
$$
\lim\limits_{n\to\infty}\int f\,d\mu_{K(n)}^x=0.
$$
In particular, for $\nu$-almost all $x\in X$, the sequence of orbital measures is not weakly recurrent, which
contradicts the assumptions of the proposition.
\end{proof}

{\bf Remark.} The argument above, combined with the ergodic decomposition theorem of \cite{Bufetov}, yields a slightly
stronger statement:  if a  $\mathfrak{T}$-invariant measure $\nu$  on $X$ that
assigns finite weight to every ball is such that for $\nu$-almost every every $x\in X$ the sequence of orbital
measures $\mu_{K(n)}^x$ is weakly recurrent, then the ergodic components of $\nu$ are almost surely finite.

 It remains to derive
Theorems \ref{mainz}, \ref{mainh} from Proposition \ref{weakrecimplfin}. We start with Theorem \ref{mainh}.

\section{Proof of Theorem \ref{mainh}}
\subsection{Radial boundedness}

A matrix $h\in H$ will be called \emph{radially bounded} in $H$ if
$$
\sup_{n\in\mathbb{N}}\frac{|\tr\left(\Pi_{[1,n]}h\right)|}{n}<+\infty, \ 
\sup_{n\in\mathbb{N}}\frac{\tr\left(\Pi_{[1,n]}h\right)^2}{n^2}<+\infty.
$$
We shall now see that if $h\in H$ is radially bounded in $H$, then the family of orbital measures $\mu_n^h$, $n\in
{\mathbb N}$,  is precompact in the weak topology on $H$, and, consequently, weakly recurrent.

Recall that if $X$ is a complete separable metric space, $\mathfrak M(X)$  the space of Borel probability measures
on $X$, then the {\it weak topology} on ${\mathfrak M}(X)$ is defined as follows. Let $ f_1,\ldots, f_k: \; X
\longrightarrow \mathbb R $ be bounded continuous functions on $X$, let $ \varepsilon_1,\ldots, \varepsilon_k
\;>\;0, $ let $\nu_0 \in \mathfrak M(X)$ and consider the set
\begin{equation} \label{bwt}
\left\{ \nu \in \mathfrak M(X): \; \left| \int f_i \, d\nu - \int f_i \, d \nu_0 \right| < \varepsilon_i, \; i=1,
\ldots, k\right\}
\end{equation}
Sets of the form \eqref{bwt} form the basis of the weak topology on $\mathfrak M(X)$. Our assumptions on $X$ imply
that the space $\mathfrak M(X)$ endowed with the weak topology is itself metrizable and separable; for instance,
the L\'evy-Prohorov metric or the Kantorovich-Rubinstein metric induce the weak topology on $X$ (see, e.g.,
\cite{Bogachev}, Section 8.3). The symbol $\Rightarrow$ will denote weak convergence in the space $\mathfrak
M(X)$.

It is clear that weak precompactness of a family of probability measures implies  weak recurrence.

\begin{proposition}
\label{radbddprecomph} If a matrix $h\in H$ is radially bounded then the sequence
$\left\{\mu_n^h\right\}_{n\in\mathbb{N}}$ of orbital measures corresponding to $h$ is weakly precompact.
\end{proposition}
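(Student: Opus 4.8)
\medskip
\noindent\textbf{Proof plan.} The plan is to show that the family $\{\mu_n^h\}_{n\in\mathbb N}$ is \emph{tight} and then invoke Prokhorov's theorem; recall that $H$, with the topology inherited from $\Mat=\mathbb C^{\mathbb N\times\mathbb N}$, is a Polish space, so on $H$ tightness is equivalent to weak precompactness. Since every set of the form $H\cap\prod_{i,j}\{|z_{ij}|\le C_{ij}\}$ is compact by Tychonoff's theorem, tightness of $\{\mu_n^h\}$ will follow as soon as we know that for each fixed pair $(i,j)$ the one-dimensional marginals $(h'\mapsto h'_{ij})_*\mu_n^h$ on $\mathbb C$, $n\in\mathbb N$, form a uniformly tight family; and by Chebyshev's inequality this in turn reduces to the single estimate
\[
\sup_{n\in\mathbb N}\ \int_H |h'_{ij}|^{2}\,d\mu_n^h(h')\ <\ +\infty\qquad\text{for every }(i,j).
\]
Everything below is devoted to this estimate.

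Fix $(i,j)$ and take $n\ge\max(i,j)$ (for $n<\max(i,j)$ there are only finitely many measures $\mu_n^h$, each of which is the image of the Haar measure on $U(n)$ under a continuous map, hence supported on a compact set and tight by itself). Writing $u=\mathrm{diag}(u_n,I)$ with $u_n\in U(n)$, conjugation by $u$ acts on the corner by $\Pi_{[1,n]}(uhu^{-1})=u_n\,(\Pi_{[1,n]}h)\,u_n^{*}$, so in particular $h'_{ij}=\bigl(u_n\,(\Pi_{[1,n]}h)\,u_n^{*}\bigr)_{ij}$ for $h'=uhu^{-1}$. By bi-invariance of Haar measure I may first diagonalize $\Pi_{[1,n]}h=V\Lambda_nV^{*}$ with $\Lambda_n=\mathrm{diag}\bigl(\lambda_1^{(n)},\dots,\lambda_n^{(n)}\bigr)$ and $\lambda_k^{(n)}\in\mathbb R$ (as $h$ is Hermitian), replacing $u_n$ by the still-Haar matrix $u_nV$; thus, in distribution,
\[
h'_{ij}=\bigl(u_n\Lambda_n u_n^{*}\bigr)_{ij}=\sum_{k=1}^{n} u_{ik}\,\overline{u_{jk}}\;\lambda_k^{(n)},\qquad u_n=(u_{kl})\ \text{Haar on }U(n).
\]

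For an off-diagonal entry ($i\ne j$) one can avoid any explicit moment computation: conjugation by $u_n$ preserves $\Pi_{[1,n]}h$ up to unitary conjugacy, so $\sum_{i',j'\le n}|h'_{i'j'}|^{2}=\tr\bigl(\Pi_{[1,n]}h'\bigr)^{2}=\tr\bigl(\Pi_{[1,n]}h\bigr)^{2}$ identically, and, since permutation matrices belong to $U(n)$, the distribution of $|h'_{i'j'}|$ under $\mu_n^h$ depends only on whether $i'=j'$; hence
\[
\int_H|h'_{ij}|^{2}\,d\mu_n^h\ \le\ \frac{1}{n(n-1)}\sum_{\substack{i',j'\le n\\ i'\ne j'}}\int_H|h'_{i'j'}|^{2}\,d\mu_n^h\ \le\ \frac{\tr\bigl(\Pi_{[1,n]}h\bigr)^{2}}{n(n-1)}.
\]
For a diagonal entry, a row of $u_n$ is uniformly distributed on the unit sphere of $\mathbb C^n$, so its squared coordinates follow the symmetric Dirichlet law, and a short second-moment computation gives
\[
\int_H|h'_{ii}|^{2}\,d\mu_n^h=\frac{\bigl(\tr\Pi_{[1,n]}h\bigr)^{2}+\tr\bigl(\Pi_{[1,n]}h\bigr)^{2}}{n(n+1)}.
\]
(These are precisely the sort of orbital-integral estimates carried out by Olshanski--Vershik \cite{OV} and by Rabaoui \cite{rabaoui1,rabaoui2}, which one may quote instead of redoing them.)

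To conclude, note that the hypothesis ``$h$ is radially bounded in $H$'' says exactly that $\bigl(\tr\Pi_{[1,n]}h\bigr)^{2}/n^{2}$ and $\tr\bigl(\Pi_{[1,n]}h\bigr)^{2}/n^{2}$ are bounded in $n$; the two displayed formulas then give $\sup_n\int_H|h'_{ij}|^{2}\,d\mu_n^h<\infty$ for every $(i,j)$. Chebyshev's inequality turns this into uniform tightness of each coordinate marginal, Tychonoff's theorem into tightness of $\{\mu_n^h\}_{n\in\mathbb N}$, and Prokhorov's theorem into the asserted weak precompactness. The step I expect to require the most care is the second-moment computation in the diagonal case (equivalently, checking that the rational expression produced by the Weingarten calculus stays bounded, which uses only the Cauchy--Schwarz bound $\bigl(\tr\Pi_{[1,n]}h\bigr)^{2}\le n\,\tr\bigl(\Pi_{[1,n]}h\bigr)^{2}$); the remaining steps are routine facts about weak convergence of measures.
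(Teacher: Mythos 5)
Your proof is correct, but it follows a genuinely different route from the paper. The paper deduces the proposition as an immediate corollary of Theorem 4.1 of Olshanski--Vershik \cite{OV}: radial boundedness gives uniform bounds on the normalized spectral data $x_i^{(n)}=\lambda_i^{(n)}/n$, $\gamma_1^{(n)}=\tr h(n)/n$, $\gamma_2^{(n)}=\tr h^2(n)/n^2$, so every subsequence admits a further subsequence along which these converge, and the Olshanski--Vershik asymptotics theorem then yields weak convergence of the corresponding orbital measures (in fact to an ergodic invariant probability measure), hence precompactness. You instead prove tightness directly: exploiting the identity $\Pi_{[1,n]}(uhu^{-1})=u_n(\Pi_{[1,n]}h)u_n^*$, permutation invariance of Haar measure for the off-diagonal entries, and the Dirichlet law of $(|u_{ik}|^2)_k$ for the diagonal ones, you get $\sup_n\int|h'_{ij}|^2\,d\mu_n^h<\infty$ for each $(i,j)$ exactly under the radial boundedness hypothesis (your two moment formulas are correct; the off-diagonal bound is in fact an equality up to the discarded diagonal terms), and then Chebyshev, Tychonoff and Prokhorov finish the argument, the topology on $H\subset\Mat$ being the product topology so that entrywise tightness suffices. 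Your route is more elementary and self-contained: it avoids the Olshanski--Vershik theorem entirely, and the same second-moment computation would also handle the two-sided action on $\Mat$ (Proposition \ref{radbddprecompz}) without invoking Rabaoui \cite{rabaoui1}, \cite{rabaoui2}. What it does not give, and what the paper's citation supplies for free, is any identification of the subsequential limits as ergodic $U(\infty)$-invariant probability measures; however, for the use made of the proposition here (weak recurrence via Proposition \ref{weakrecimplfin}), precompactness alone is all that is needed, so your argument is fully adequate.
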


This Proposition is an immediate Corollary of Theorem 4.1 in Olshanski-Vershik \cite{OV}. Indeed, let $h\in H$ be
radially bounded, let
\[
h(n) = \Pi_{[1,n]}h=(h_{ij})_{i,j=1,\ldots,n},
\]
let
\[
\lambda_1^{(n)} \ge \ldots \ge \lambda_{k_n}^{(n)} \ge 0
\]
be the nonnegative eigenvalues of $h(n)$ arranged in decreasing order, and let
\[
\widetilde\lambda_1^{(n)} \le \widetilde\lambda_2^{(n)} \le \ldots \le \widetilde\lambda_{l_n}^{(n)} <0
\]
be the negative eigenvalues of $h(n)$ arranged in increasing order. Set
\[
x_i^{(n)} = \frac{\lambda_i^{(n)}}{n}, \qquad \widetilde x_i^{(n)} = \frac{\widetilde \lambda_i^{(n)}}{n};
\]
\[
\gamma_1^{(n)} = \frac{\tr h(n)}{n}, \qquad \gamma_2^{(n)} = \frac{\tr h^2(n)}{n^2}.
\]
Let $h$ is radially bounded, and let positive constants $C_1, C_2$ be such that for all $n\in {\mathbb N}$ we have
$$
{|\tr\left(\Pi_{[1,n]}h\right)|}\leq C_1 n, \ \  
\tr\left(\Pi_{[1,n]}h\right)^2\leq C_2n^2.
$$
We clearly have 
$$
|\gamma_1^{(n)}|\leq C_1, \ \ 0\leq \gamma_2^{(n)}\leq C_2,
$$
and, for all $i= 1, \dots, n$, we have 
$$
|x_i^{(n)}|, \ |{\tilde x}_i^{(n)}|  \leq C_2.
$$
Therefore, any infinite set of natural numbers contains a subsequence $n_r$ such that
sequences $\gamma_1^{(n_r)}, \gamma_2^{(n_r)}$, as well as the sequences $x_i^{(n_r)}, \widetilde x_i^{(n_r)}$ for
all $i=1,2,\ldots$ converge to a finite limit as $r \to \infty$. By the Olshanski-Vershik Theorem (Theorem 4.1 in
\cite{OV}), in this case the sequence $\mu_{n_r}^h$ of orbital measures weakly converges (in fact, to an ergodic
$U(\infty)$-invariant probability measure) as $r \to \infty$. The Proposition is proved completely.

{\bf Remark.} The converse claim (which, however, we do not need for our argument) also holds: if the sequence 
of orbital measures for a matrix $h\in H$ is weakly pecompact, then the matrix $h$ is radially bounded.
 This immediately follows from claim
(ii) of Theorem 4.1 of Olshanski and Vershik \cite{OV}. Note that, while claim (ii) in \cite{OV} is only
formulated for the full sequence of orbital measures, the same result, with the identical proof, is valid for any
infinite subsequence of orbital measures.

Observe that Theorem 4.1 in  Olshanski-Vershik \cite{OV} as well as the Ergodic Decomposition Theorem of
Borodin-Olshanski \cite{BO} immediately imply the following
\begin{proposition}
\label{finradbdd} If $\nu$ is a finite Borel $U(\infty)$-invariant measure on $H$, then $\nu$-almost every $h\in
H$ is radially bounded.
\end{proposition}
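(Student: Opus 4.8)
The plan is to reduce to the ergodic case and then quote the structure theory directly. Since the statement is homogeneous in $\nu$, I would first normalize $\nu$ to a probability measure (the zero measure being trivial). By the ergodic decomposition theorem for finite $U(\infty)$-invariant measures (see \cite{BO}, \cite{Bufetov}) there is a probability measure ${\tilde\nu}$ on ${\mathfrak M}_{erg}(H)$ with $\nu=\int \eta\, d{\tilde\nu}(\eta)$, understood in the weak sense, so that $\nu(R)=\int_{{\mathfrak M}_{erg}(H)}\eta(R)\,d{\tilde\nu}(\eta)$ for every Borel set $R\subset H$. It therefore suffices to exhibit a Borel set $R\subset H$ consisting of radially bounded matrices with $\eta(R)=1$ for ${\tilde\nu}$-almost every ergodic $\eta$. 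I would take $R$ to be the set of all radially bounded matrices itself; its Borel measurability is immediate, since $h\mapsto \tr(\Pi_{[1,n]}h)$ and $h\mapsto \tr\big((\Pi_{[1,n]}h)^2\big)$ are continuous and
$$
R=\bigcup_{C\in\mathbb N}\ \bigcap_{n\in\mathbb N}\Big\{h\in H:\ |\tr(\Pi_{[1,n]}h)|\le Cn,\ \ \tr\big(\Pi_{[1,n]}h\big)^2\le Cn^2\Big\}
$$
is a countable union of closed sets.

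Next I would treat the ergodic case. Let $\nu$ be an ergodic $U(\infty)$-invariant probability measure on $H$. By Theorem 4.1 of Olshanski--Vershik \cite{OV}, $\nu$ corresponds to a point of the Pickrell parameter space, and for $\nu$-almost every $h\in H$ the approximating parameters read off from the spectra of the corners $h(n)=\Pi_{[1,n]}h$ converge as $n\to\infty$; in the notation fixed above this includes the convergence of $\gamma_1^{(n)}=\tr h(n)/n$ and of $\gamma_2^{(n)}=\tr h^2(n)/n^2$ to finite limits. A convergent sequence of reals is bounded, so for $\nu$-almost every $h$ we obtain
$$
\sup_{n\in\mathbb N}\frac{|\tr(\Pi_{[1,n]}h)|}{n}=\sup_{n\in\mathbb N}|\gamma_1^{(n)}|<\infty,\qquad \sup_{n\in\mathbb N}\frac{\tr(\Pi_{[1,n]}h)^2}{n^2}=\sup_{n\in\mathbb N}\gamma_2^{(n)}<\infty,
$$
which is exactly radial boundedness of $h$. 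Hence $\nu(R)=1$ for ergodic $\nu$, and integrating this over the ergodic decomposition of the first paragraph yields $\nu(R)=1$ for an arbitrary finite $U(\infty)$-invariant $\nu$.

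The only genuine input is imported from \cite{OV}, and the step that I expect to require care --- rather than real difficulty --- is confirming that Theorem 4.1 of \cite{OV} actually delivers the almost-sure convergence of the two radial scalars $\gamma_1^{(n)}$ and $\gamma_2^{(n)}$, and not merely of the rescaled eigenvalues $x_i^{(n)},\widetilde x_i^{(n)}$; this is part of the identification of the full limiting parameter (including the ``Gaussian'' component and the shift) carried out in that theorem. Everything else --- the normalization to a probability measure, the Borel measurability of $R$, and the passage from ergodic components back to $\nu$ --- is routine bookkeeping and presents no obstacle.
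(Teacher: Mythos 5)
Your proposal is correct and follows essentially the same route as the paper: reduce to the ergodic case via the ergodic decomposition theorem (the paper cites Borodin--Olshanski \cite{BO}) and then invoke Theorem 4.1 of Olshanski--Vershik \cite{OV} for ergodic measures. The paper phrases the ergodic step as almost-sure weak convergence of the orbital measures $\mu_n^h$ to $\nu$, with radial boundedness then supplied by claim (ii) of that theorem, whereas you read off the almost-sure convergence of $\gamma_1^{(n)},\gamma_2^{(n)}$ directly; both are parts of the same result, so the substance is identical.
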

\begin{proof}
Indeed, if $\nu$ is an ergodic probability measure, then the claim is part of the statement of the
Olshanski-Vershik Theorem: in this case, for $\nu$-almost all $h\in H$, the sequence of orbital measures $\mu_n^h$
weakly converges to $\nu$. For a general finite measure, the result follows from the Ergodic Decomposition Theorem
of Borodin and Olshanski \cite{BO}.
\end{proof}

To complete the proof of Theorem \ref{mainh}, it remains to establish
\begin{proposition}\label{radbdd}
If a $U(\infty)$-invariant measure $\nu$ belongs to the class $\classF$ for some $m\in\mathbb{N}$, then $\nu$-almost every $h\in H$ is radially bounded.
\end{proposition}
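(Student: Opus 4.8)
The plan is to manufacture from $\nu$ a family of \emph{finite} $U(\infty)$-invariant measures on $H$ to which Proposition~\ref{finradbdd} can be applied, and then to transfer the resulting radial boundedness back to $\nu$-almost every $h$ by means of one finite permutation of coordinates. For the first step, fix $R\in\mathbb N$ and let $E_R=\{h\in H:\max_{i,j\le m}|h_{ij}|\le R\}$; by the very definition of the class $\classF$ we have $\nu(E_R)<\infty$, while $\bigcup_R E_R=H$. Let $U(\infty)^{(m)}$ denote the subgroup of those $u\in U(\infty)$ fixing $e_1,\dots,e_m$; since $\Pi_{[1,m]}(uhu^{*})=\Pi_{[1,m]}h$ for such $u$, the set $E_R$ is $U(\infty)^{(m)}$-invariant, so $\nu_R:=\nu|_{E_R}$ is a finite $U(\infty)^{(m)}$-invariant measure. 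Let $\Sigma_m\colon H\to H$ be the shift of corners, $(\Sigma_m h)_{ij}=h_{m+i,\,m+j}$: it is continuous and intertwines the $U(\infty)^{(m)}$-action on $H$ with the standard $U(\infty)$-action on $H$ (after the obvious relabelling of indices), so $\mu_R:=(\Sigma_m)_*\nu_R$ is a finite $U(\infty)$-invariant Borel measure on $H$.

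By Proposition~\ref{finradbdd}, $\mu_R$-almost every matrix is radially bounded; pulling this back through $\Sigma_m$ and then letting $R\to\infty$ yields: for $\nu$-almost every $h\in H$ the corner $\Sigma_m h$ is radially bounded. Next let $\tau$ be the finite involution interchanging $\{1,\dots,m\}$ with $\{m+1,\dots,2m\}$, so that $P_\tau\in U(\infty)$; since $\nu$ is $U(\infty)$-invariant, the conclusion just obtained applies equally to $P_\tau h P_\tau^{*}$, and hence for $\nu$-almost every $h$ the matrix $\Sigma_m(P_\tau h P_\tau^{*})$ is radially bounded. The point of this particular $\tau$ is that $\Sigma_m(P_\tau h P_\tau^{*})$ is precisely the submatrix of $h$ carried by the rows and columns indexed by $\{1,\dots,m\}\cup\{2m+1,2m+2,\dots\}$.

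It then remains to fix $h$ in the (full-measure) set on which both $\Sigma_m h$ and $\Sigma_m(P_\tau h P_\tau^{*})$ are radially bounded and to verify the two defining inequalities. For the first, $\tr\left(\Pi_{[1,N]}h\right)=\sum_{i\le m}h_{ii}+\tr\left(\Pi_{[1,N-m]}\Sigma_m h\right)$ for $N\ge m$; the first summand is a constant (for the fixed $h$) and the second is $O(N)$, so $|\tr(\Pi_{[1,N]}h)|=O(N)$. For the second, split $\tr\left(\Pi_{[1,N]}h\right)^{2}=\sum_{i,j\le N}|h_{ij}|^{2}$ according to whether each index lies in $\{1,\dots,m\}$: the $\{m{+}1,\dots,N\}$-block equals $\tr\left(\Pi_{[1,N-m]}\Sigma_m h\right)^{2}=O(N^{2})$, the $\{1,\dots,m\}$-block is a constant, and the two cross-blocks, once the finitely many terms whose remaining index lies in $\{m{+}1,\dots,2m\}$ are set aside, have total dominated by $\tr\left(\Pi_{[1,N-m]}\Sigma_m(P_\tau h P_\tau^{*})\right)^{2}=O(N^{2})$. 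Hence $\tr(\Pi_{[1,N]}h)^{2}=O(N^{2})$, so $h$ is radially bounded, which proves the proposition.

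The main obstacle is the first step: $\nu$ is genuinely infinite, so Proposition~\ref{finradbdd} cannot be applied to it, and a naive finite piece $\nu|_{E_R}$ fails to be $U(\infty)$-invariant --- it is invariant only under the subgroup $U(\infty)^{(m)}$ fixing the first $m$ basis vectors, and the role of $\Sigma_m$ is exactly to convert this partial invariance into genuine $U(\infty)$-invariance on a copy of $H$. A secondary, more technical, difficulty is that radial boundedness of $\Sigma_m h$ governs only the entries $h_{ij}$ with $i,j>m$ and says nothing about the ``cross'' entries $h_{ij}$ with $i\le m<j$; the single permutation $\tau$ above is introduced precisely to move the corresponding off-diagonal block onto the diagonal of a $U(\infty)$-image of $h$, so that it falls under the same estimate. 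The same argument applies verbatim to the class $\classFNC$, with $\Sigma_m$ deleting the first $m$ rows and the first $m$ columns and $U(\infty)^{(m)}$ replaced by the subgroup of $\UxU$ fixing $e_1,\dots,e_m$ on each side.
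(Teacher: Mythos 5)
Your proof is correct, and its second half --- deducing radial boundedness of $h$ from that of its deleted corner and of the deleted corner of the conjugate by the involution swapping $\{1,\dots,m\}$ with $\{m+1,\dots,2m\}$ --- is essentially the paper's own argument (the matrix $\check u$ and the estimate of Proposition \ref{radest}). Where you genuinely diverge is the first half. The paper goes through Rohlin's conditional measures: it decomposes $\nu$ over its projection to $H(m)$, proves (Proposition \ref{uinv}, via uniqueness of conditional measures and a countable subgroup of $U_m(\infty)$ determining invariance) that the pushforwards $\left(\Pi_{[m,\infty)}\right)_*\nu_{h^{(m)}}$ are $U(\infty)$-invariant probability measures, applies Proposition \ref{finradbdd} fiberwise, and integrates to get Corollary \ref{projradbdd}. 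You instead exhaust $H$ by the sets $E_R$ of finite $\nu$-measure furnished by the definition of $\classF$, note that each $E_R$ is invariant under the stabilizer $U_m(\infty)$ of $e_1,\dots,e_m$ (such a unitary is block-diagonal, so conjugation by it leaves the $m\times m$ corner untouched and the deleted corner transforms by the standard action), and push the finite invariant restrictions $\nu|_{E_R}$ forward by the corner-deleting map; Proposition \ref{finradbdd} then applies directly to these finite $U(\infty)$-invariant measures, and letting $R\to\infty$ gives the same intermediate conclusion for $\nu$-almost every $h$. This is a more elementary route: it bypasses Rohlin's theorem and the countable-subgroup uniqueness argument entirely, using only the invariance of the sets defining the class $\classF$. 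The paper's conditional-measure formulation has the advantage of matching the description of $\classF$ via projections given in the introduction and of being reused verbatim for $\Mat$ (Proposition \ref{zinv}), but your argument transfers to $\classFNC$ equally well, as you observe; both routes ultimately rest on the same two inputs, Proposition \ref{finradbdd} (Olshanski--Vershik plus ergodic decomposition) and the permutation estimate.
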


\subsection{Proof of Proposition \ref{radbdd}}
For a matrix $z\in\Mat$, $n\in {\mathbb N}$, denote
$$
\Pi_{[n,\infty)}z=(z_{ij})_{i,j=n, n+1, \dots}.
$$

We start by showing that, under the assumptions of the proposition, 
for $\nu$-almost every $h\in H$ the matrix $\Pi_{[m, \infty)}h$ is radially bounded.

Take a measure $\nu \in \mathfrak F(m; H)$ and consider the corresponding canonical decomposition (\ref{ergdech})
into conditional measures.

\begin{proposition}
\label{uinv} Let $\nu \in \mathfrak F(m; H)$ be $U(\infty)$-invariant. Then for ${\overline \nu}$-almost every
$h^{(m)} \in H(m)$ the probability measure
\[
\textstyle \left( \Pi_{[m,\infty)} \right)_* \nu_{h^{(m)}}
\]
on $H$ is also $U(\infty)$-invariant.
\end{proposition}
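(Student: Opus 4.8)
The plan is to recognize Proposition~\ref{uinv} as an instance of the standard principle that disintegrating a $G$-invariant measure over a factor on which $G$ acts trivially yields $G$-invariant conditional measures almost everywhere. The relevant group here is not all of $U(\infty)$ but the subgroup $G_m$ of block unitaries $u=I_m\oplus u'$, $u'\in U(\infty)$, acting only on the coordinates $m+1,m+2,\dots$. I would first record three elementary properties of $G_m$: (a) it is itself a copy of $U(\infty)$, inductively compact, exhausted by the compact groups $K(n)\cap G_m$; (b) conjugation by $I_m\oplus u'$ leaves every entry $h_{ij}$ with $i,j\leqslant m$ unchanged, so $\Pi_{[1,m]}\circ T_u=\Pi_{[1,m]}$ and $T_u$ maps each fiber $\Pi_{[1,m]}^{-1}(h^{(m)})$ bijectively onto itself; (c) a one-line matrix computation gives the intertwining $\Pi_{[m,\infty)}\circ T_u=T_{\widehat u}\circ\Pi_{[m,\infty)}$, where $\widehat u=1\oplus u'$ is the unitary of the index set $\{m,m+1,\dots\}$ that acts as $u'$ on $\{m+1,m+2,\dots\}$ and fixes the coordinate $m$. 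Under the identification $u\mapsto\widehat u$ the group $G_m$ becomes the subgroup of $U(\infty)$ stabilizing the first coordinate of the $\{m,m+1,\dots\}$-indexed matrix; this is the meaning of ``$U(\infty)$-invariance'' in the statement, and it is all that is needed downstream, since adjoining one more frozen row and column does not affect radial boundedness.

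Granting (a)--(c), the argument runs as follows. Fix $u\in G_m$. By $U(\infty)$-invariance $(T_u)_*\nu=\nu$, so, applying $(T_u)_*$ to the disintegration \eqref{ergdech} and using (b), both $\nu=\int\nu_{h^{(m)}}\,d\overline\nu$ and $\nu=\int (T_u)_*\nu_{h^{(m)}}\,d\overline\nu$ are disintegrations of $\nu$ along $\Pi_{[1,m]}$ with respect to the same base measure $\overline\nu=(\Pi_{[1,m]})_*\nu$, with $(T_u)_*\nu_{h^{(m)}}$ again a probability measure carried by $\Pi_{[1,m]}^{-1}(h^{(m)})$. Rohlin's uniqueness theorem --- legitimate because $H$ is Polish, $\Pi_{[1,m]}$ is Borel, and $\overline\nu$, being finite on compacts of $H(m)\cong\mathbb R^{m^2}$, is $\sigma$-finite --- then yields $(T_u)_*\nu_{h^{(m)}}=\nu_{h^{(m)}}$ for $\overline\nu$-almost every $h^{(m)}$, with an exceptional null set $N_u$ depending on $u$. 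To obtain one null set serving all $u$, I would pick a countable dense subgroup $\{u_j\}\subset G_m$ (possible as $G_m$ is second countable) and set $N=\bigcup_j N_{u_j}$; for $h^{(m)}\notin N$ the measure $\nu_{h^{(m)}}$ is $u_j$-invariant for every $j$, and since the action $T$ is jointly continuous, $u\mapsto\int (f\circ T_u)\,d\nu_{h^{(m)}}$ is continuous for every bounded continuous $f$, so invariance propagates from $\{u_j\}$ to all of $G_m$. Finally, for $h^{(m)}\notin N$ and any $u\in G_m$, property (c) gives $(T_{\widehat u})_*(\Pi_{[m,\infty)})_*\nu_{h^{(m)}}=(\Pi_{[m,\infty)})_*(T_u)_*\nu_{h^{(m)}}=(\Pi_{[m,\infty)})_*\nu_{h^{(m)}}$, which is precisely the asserted invariance.

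The only step demanding real care is the passage from a single $u$ to the whole group $G_m$: since $G_m$ is not compact this is not a formal consequence of finitely many applications of Rohlin's theorem, and one leans on separability of $G_m$ together with weak-continuity of $u\mapsto(T_u)_*\nu_{h^{(m)}}$. An alternative that bypasses the density argument is to proceed one compact layer at a time: for each $n$, integrate the identity $(T_u)_*\nu=\nu$ over $u\in K(n)\cap G_m$ against Haar measure and apply Fubini to conclude directly that $\nu_{h^{(m)}}$ is $(K(n)\cap G_m)$-invariant for $\overline\nu$-almost every $h^{(m)}$; a countable union over $n$ then gives full $G_m$-invariance. Everything else --- restriction of the invariance to the subgroup, the intertwining (c), and the $\sigma$-finiteness underwriting the disintegration --- is routine.
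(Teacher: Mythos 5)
Your argument is correct and follows essentially the same route as the paper: restrict to the block subgroup $U_m(\infty)=I_m\oplus U(\infty)$ (your $G_m$), use uniqueness of Rohlin's conditional measures to get almost-sure invariance for a countable family of group elements, pass to the full subgroup by separability and continuity of the action (the paper's countable subgroup $U'_m(\infty)$ is exactly this device), and transfer the invariance to $\left(\Pi_{[m,\infty)}\right)_*\nu_{h^{(m)}}$ via the intertwining with conjugation. Your explicit remark that the image of this subgroup under $\Pi_{[m,\infty)}$ is the stabilizer of the first retained coordinate, and that this suffices for the later application, is a fair reading of the same identification the paper makes implicitly.
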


\begin{proof}

Let $U_m(\infty) \subset U(\infty)$ be the subgroup of matrices $u = (u_{ij})$ satisfying the conditions:

\begin{enumerate}
\item if $\min (i,j) \le m, \; i\ne j$, then $u_{ij}=0$
\item if $i \le m$, then $u_{ii}=1$
\end{enumerate}

It follows from the definitions that if $u \in U_m(\infty)$, then $\Pi_{[m,\infty)}u \in U(\infty)$, and that the
map
\[
\Pi_{[m, \infty)}: \, U_m(\infty) \longrightarrow U(\infty)
\]
is a group isomorphism.

For $u \in U(\infty)$ let ${\mathfrak t}_u: \, H\to H$ be given by the formula
\[
\mathfrak t_u (h) \;=\; u^{-1}hu.
\]
Let $U'_m(\infty) \subset U_m(\infty)$ be a countable subgroup such that any Borel probability measure $\eta$ on
$H$ satisfying $ (\mathfrak t_u)_*\eta = \eta$ for all $u \in U'_m(\infty)$ must be invariant under the whole
group $U_m(\infty)$.

Uniqueness of Rohlin's system of conditional measures implies that for $\overline \nu$-almost every $h^{(m)} \in
H(m)$ and every $u \in U'_m(\infty)$ we have
\begin{equation} \label{uuinv}
\nu_{h^{(m)}} \;=\; (\mathfrak t_u)_*\nu_{h^{(m)}}.
\end{equation}
By definition of the subgroup $U'_m(\infty)$, the equality \eqref{uinv} also holds for all $u \in U_m(\infty)$.
Now let $A$ be a measurable subset of $H$, and let
\[
\textstyle \widetilde A_{h^{(m)}} \;=\; \left\{ h \in H: \, \Pi_{[1,m]} h=h^{(m)}, \, \Pi_{[m,\infty)}h \in A
\right\}.
\]

Let $u \in U(\infty)$ and let $\widetilde u \in U_m(\infty)$ be defined by the formula
\[
\textstyle \Pi_{[m,\infty)} \widetilde u \;=\; u.
\]
 From the definitions it follows:
\[
\mathfrak t_{\widetilde u} (\widetilde A_{h^{(m)}}) \;=\; \textstyle \left\{ h \in H: \, \Pi_{[1,m]} h = h^{(m)},
\, \Pi_{[m,\infty)} h \in \mathfrak t_u(A). \right\}
\]
Since
\[
\nu_{h^{(m)}} ( \widetilde A_{h^{(m)}} ) \; = \; \nu_{h^{(m)}} ( \mathfrak t_{\widetilde u} (\widetilde
A_{h^{(m)}} ) ),
\]
we have
\[
\textstyle \left( \Pi_{[m, \infty)} \right)_* \nu_{h^{(m)}} (A) \;=\; \left( \Pi_{[m, \infty)} \right)_*
\nu_{h^{(m)}} \left( \mathfrak t_u (A) \right),
\]
and the proposition is proved.
\end{proof}

%\begin{corollary}
%Let $\nu \in \mathcal F(m)$ be $U(\infty)$-invariant. Then for
%$\nu$-almost every $h \in H$ the matrix $\Pi_{[m,\infty)} (h)$ is
%radially bounded.
%\end{corollary}

\begin{corollary}
\label{projradbdd} If $\nu \in \mathfrak F(m; H)$ is $U(\infty)$-invariant, then for $\nu$-almost every $h\in H$
the the matrix $\Pi_{[m,\infty)} (h)$ is radially bounded.
\end{corollary}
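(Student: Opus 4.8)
The plan is to read the statement off the canonical decomposition (\ref{ergdech}) together with the two facts already in hand: that the conditional measures $\nu_{h^{(m)}}$ push forward under $\Pi_{[m,\infty)}$ to $U(\infty)$-invariant \emph{probability} measures on $H$ (Proposition \ref{uinv}), and that a finite $U(\infty)$-invariant measure on $H$ is carried by the radially bounded matrices (Proposition \ref{finradbdd}). The only point that requires any attention is that these push-forwards are genuinely probability --- hence finite --- measures, which is precisely what the membership $\nu\in\classF$ buys us; once that is granted there is essentially no obstacle.

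First I would fix the bookkeeping. Identify the index set $\{m,m+1,\dots\}$ with $\mathbb N$ via the shift $i\mapsto i-m+1$; under this identification $\Pi_{[m,\infty)}$ maps $H$ into $H$, the conjugation action of $U(\infty)$ on matrices indexed by $\{m,m+1,\dots\}$ becomes the usual action on $H$, and for every $n$ the corner $\Pi_{[1,n]}\bigl(\Pi_{[m,\infty)}h\bigr)=(h_{ij})_{i,j=m,\dots,m+n-1}$ corresponds to $\Pi_{[1,n]}$ applied to the shifted matrix. Thus ``$\Pi_{[m,\infty)}h$ is radially bounded'' means exactly that the shifted matrix lies in the set $\mathcal R\subset H$ of radially bounded matrices, equivalently that $h$ lies in $\mathcal R_m:=\bigl(\Pi_{[m,\infty)}\bigr)^{-1}(\mathcal R)$. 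Both $\mathcal R$ and $\mathcal R_m$ are Borel: each of the two defining conditions is a countable union over $C\in\mathbb N$ of an intersection over $n$ of level sets of the continuous functionals $h\mapsto\tr\bigl(\Pi_{[1,n]}\Pi_{[m,\infty)}h\bigr)$ and $h\mapsto\tr\bigl((\Pi_{[1,n]}\Pi_{[m,\infty)}h)^2\bigr)$.

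Next comes the core step. By Proposition \ref{uinv} there is a $\overline\nu$-conull set $E\subset H(m)$ such that for every $h^{(m)}\in E$ the measure $\eta_{h^{(m)}}:=\bigl(\Pi_{[m,\infty)}\bigr)_*\nu_{h^{(m)}}$ is a $U(\infty)$-invariant Borel measure on $H$; and it is a probability measure since $\nu_{h^{(m)}}$ is. Being finite and $U(\infty)$-invariant, $\eta_{h^{(m)}}$ meets the hypothesis of Proposition \ref{finradbdd}, which therefore gives $\eta_{h^{(m)}}(H\setminus\mathcal R)=0$, i.e.
\[
\nu_{h^{(m)}}(H\setminus\mathcal R_m)\;=\;\eta_{h^{(m)}}(H\setminus\mathcal R)\;=\;0
\qquad\text{for every }h^{(m)}\in E .
\]
Finally I would integrate against (\ref{ergdech}):
\[
\nu(H\setminus\mathcal R_m)\;=\;\int_{H(m)}\nu_{h^{(m)}}(H\setminus\mathcal R_m)\,d\overline\nu(h^{(m)})\;=\;\int_{E}0\,d\overline\nu(h^{(m)})\;=\;0,
\]
using $\overline\nu(H(m)\setminus E)=0$. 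Hence $\Pi_{[m,\infty)}h$ is radially bounded for $\nu$-almost every $h\in H$, which is the assertion of the corollary. As indicated, I do not expect a genuine difficulty here: the whole substance of the argument has been absorbed into Propositions \ref{uinv} and \ref{finradbdd}, and the single point deserving care --- finiteness of the push-forward conditional measures, which lets Proposition \ref{finradbdd} be applied at all --- is exactly where the defining property of the class $\classF$ enters.
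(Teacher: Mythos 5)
Your argument is correct and is exactly the derivation the paper intends for this corollary: push the conditional measures forward by $\Pi_{[m,\infty)}$, invoke Proposition \ref{uinv} for invariance and Proposition \ref{finradbdd} for the finite (probability) case, and integrate over the decomposition (\ref{ergdech}). The extra care you take with the shift identification and the Borel measurability of the set of radially bounded matrices is sound and only makes explicit what the paper leaves implicit.
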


We proceed with the proof of Proposition \ref{radbdd}.
 Let $\check u \in U(\infty)$ be defined as follows:
\begin{align}
\label{uhatdef}
\check u_{i,m+i} = \check u_{m+i,i} = 1 & \qquad i=1,\ldots, m\\
\check u_{2m+i,2m+i}=1 & \qquad i\in \mathbb N\\
\check u_{ij} = 0 & \qquad\text{otherwise}.
\end{align}

\begin{proposition}\label{radest}
Let $h \in H$. If $\Pi_{[m,\infty)}(h)$ and $\Pi_{[m,\infty)}(\check u^{-1}h\check u)$ are radially bounded, then
$h$ is also radially bounded.
\end{proposition}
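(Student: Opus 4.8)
The plan is to convert both hypotheses into size bounds on Hilbert--Schmidt norms of finite corners of $h$, and then to reconstruct the corner $\Pi_{[1,n]}h$ out of index blocks, each of which is controlled by one of the two hypotheses. Throughout I use that $\check u$ is nothing but the permutation matrix of the involution $\sigma$ of $\mathbb N$ that interchanges $i$ and $m+i$ for $i=1,\dots,m$ and fixes every index $>2m$; hence $\check u^{-1}=\check u$ and $(\check u^{-1}h\check u)_{ij}=h_{\sigma(i),\sigma(j)}$. I also use the elementary identities, valid because $h$ is Hermitian,
$$
\tr\bigl(\Pi_{[1,n]}h\bigr)=\sum_{i=1}^{n}h_{ii},\qquad \tr\bigl((\Pi_{[1,n]}h)^2\bigr)=\sum_{i,j=1}^{n}|h_{ij}|^2 .
$$

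First I would unpack what the two hypotheses say about $h$ directly. After the obvious reindexing $i\mapsto i-m+1$, the $k$-th corner of $\Pi_{[m,\infty)}h$ is $(h_{ij})_{m\le i,j\le m+k-1}$, so radial boundedness of $\Pi_{[m,\infty)}h$ supplies, with $k=n-m+1$, the bounds $\bigl|\sum_{i=m}^{n}h_{ii}\bigr|=O(n)$ and $\sum_{m\le i,j\le n}|h_{ij}|^2=O(n^2)$. Since for $n\ge 2m$ the permutation $\sigma$ carries $\{m,\dots,n\}$ bijectively onto $\{1,\dots,m\}\cup\{2m,\dots,n\}$, the $k$-th corner of $\Pi_{[m,\infty)}(\check u^{-1}h\check u)$ is $(h_{\sigma(i),\sigma(j)})_{m\le i,j\le n}$, and its radial boundedness gives $\sum_{(i,j)\in B_2}|h_{ij}|^2=O(n^2)$ where $B_2=(\{1,\dots,m\}\cup\{2m,\dots,n\})^2$. (The trace of the conjugate is not needed.)

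The key step is the combinatorial remark that, for $n\ge 2m$, the square $\{1,\dots,n\}^2$ is covered by $B_1=\{m,\dots,n\}^2$, by $B_2$, and by a residual set $B_0$ consisting exactly of the pairs having one coordinate $\le m-1$ and the other in $\{m+1,\dots,2m-1\}$; this $B_0$ is a finite set independent of $n$, so $\sum_{(i,j)\in B_0}|h_{ij}|^2$ is a constant. Combining the three contributions gives $\sum_{i,j\le n}|h_{ij}|^2 = O(1)+O(n^2)+O(n^2)=O(n^2)$, while $\bigl|\sum_{i\le n}h_{ii}\bigr|\le\bigl|\sum_{i=1}^{m-1}h_{ii}\bigr|+\bigl|\sum_{i=m}^{n}h_{ii}\bigr|=O(n)$. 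The finitely many corners with $n<2m$ only affect the relevant suprema by a bounded amount, so both defining inequalities of radial boundedness of $h$ follow.

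I do not expect a genuine obstacle here; the one thing that needs care is the index bookkeeping, namely verifying that $\sigma(\{m,\dots,n\})=\{1,\dots,m\}\cup\{2m,\dots,n\}$ and that the residual block $B_0$ really is the asserted finite set (equivalently, that every pair $(i,j)$ with $i,j\le n$ which lies in neither $B_1$ nor $B_2$ has one index $\le m-1$ and the other strictly between $m$ and $2m$). Everything else reduces to the two radial-boundedness hypotheses together with the Hermitian trace identities recorded above.
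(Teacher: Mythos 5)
Your proof is correct and follows essentially the same route as the paper's: the trace estimate via additivity of the trace over the corner controlled by the first hypothesis, and the Hilbert--Schmidt estimate via covering $\{1,\dots,n\}^2$ by $\{m,\dots,n\}^2$, by $\sigma(\{m,\dots,n\})^2=(\{1,\dots,m\}\cup\{2m,\dots,n\})^2$ (controlled by the conjugated hypothesis), and by a bounded residual block. The paper's displayed inequality uses the finite block $\{1,\dots,2m\}^2$ where you use the finite set $B_0$, a purely cosmetic difference in the bookkeeping.
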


\begin{proof}
If $\Pi_{[m,\infty)}(h)$ is radially bounded, then 
$$
\sup_{n\in\mathbb{N}}\frac{|\tr\left(\Pi_{[1,n]}(\Pi_{[m,\infty)}h)\right)|}{n}<+\infty,
$$
and, since for $n>m$ we have 
$$
\tr\left(\Pi_{[1,n]}(h)\right)=\tr\left(\Pi_{[1,n]}(\Pi_{[m,\infty)}h)\right)+\tr\left(\Pi_{[1,m]}(h)\right),
$$
it follows that 
$$
\sup_{n\in\mathbb{N}}\frac{|\tr\left(\Pi_{[1,n]}h\right)|}{n}<+\infty.
$$
It remains to show that 
$$
\sup_{n\in\mathbb{N}}\frac{\tr\left(\Pi_{[1,n]}h\right)^2}{n^2}<+\infty.
$$
Let $\pi$ be a permutation of $\mathbb N$ defined as follows:
\[
\pi(i) =
\begin{cases}
m+i &i=1,\ldots, m;\\
i-m &i=m+1, \ldots, 2m;\\
i &i>2m.
\end{cases}
\]
By definition, for any $h\in H$ we have
\[
(\check u^{-1}h\check u)_{ij} \;=\; \check h_{\pi(i)\pi(j)}.
\]
Consequently, for any $N \in \mathbb N$ we have
\[
\sum_{i,j=1}^N \left| h_{ij} \right|^2 \;\le\; \sum_{i,j=m+1}^N \left| h_{ij} \right|^2 + \sum_{i,j=m+1}^N \left|
(\check u^{-1}h\check u)_{ij} \right|^2 + \sum_{i,j=1}^{2m} \left| h_{ij} \right|^2.
\]

%Since
%\[
%\sum_{i=1}^N h_{ij} \;=\; \sum_{i=1}^m h_{ii} + \sum_{i=1}^{N-m}
%\textstyle \left( \Pi_{[m,\infty)}h
%\right)_{ii},
%\]
%the Proposition is proved.
\end{proof}

%\begin{corollary}
%Let $\nu \in \mathcal F(m)$ be $U(\infty)$-invariant. Then
%$\nu$-almost every $h \in H$ is radially bounded.
%\end{corollary}

Proposition \ref{radbdd} is now  immediate from Corollary \ref{projradbdd} and Propositions \ref{uinv},
\ref{radest}.

Theorem \ref{mainh} is proved completely.

\section{Proof of Theorem \ref{mainz}.}
The proof is similar (and simpler) in this case.
%of $\UxU$-invariant measures on
%$\Mat$.
Again, a matrix $z\in\Mat$ will be called \emph{radially bounded} if
$$
\sup_{n\in \mathbb{N}}\frac{\tr\left(\Pi_{[1,n]}z\right)^{*}\left(\Pi_{[1,n]}z\right)}{n^2}<+\infty
$$
(here, as usual, the symbol $z^*$ stands for the transpose conjugate of a matrix $z$). As before, we assign to a matrix
$z\in\Mat$ the sequence $\mu_n^z$ of orbital measures corresponding to the sequence of compact subgroups
$U(n)\times U(n)$, $n\in\mathbb{N}$,
 and
say that a matrix $z\in\Mat$ is \emph{weakly recurrent} if for any bounded positive continuous function $f$ on
$\Mat$ we have
$$
\inf_{n\in\mathbb{N}}\int_{\Mat}f\,d\mu_n^z > 0
$$
%(here, as before, the measures $\mu_n^z$ are the orbital measures for the
%matrix $z$ corresponding to the sequence
%of compact subgroups $U(n)\times U(n)$, $n\in\mathbb{N}$.)
Again
%, from the results of Pickrell \cite{pickrell},
we have the following
\begin{proposition}
\label{radbddprecompz} If a  matrix $z\in \Mat$ is radially bounded then the sequence of orbital measures
$\mu_n^z$ is weakly precompact. In particular, if  $z$ is radially bounded, then $z$ is also weakly recurrent.
\end{proposition}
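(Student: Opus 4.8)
The plan is to mimic the derivation of Proposition \ref{radbddprecomph}, replacing Theorem 4.1 of Olshanski--Vershik \cite{OV} by its counterpart for the two-sided action of $\UxU$ on $\Mat$, established by Rabaoui \cite{rabaoui1}, \cite{rabaoui2} on the basis of Pickrell's classification \cite{pickrell}. That counterpart asserts that, for a sequence of matrices whose suitably rescaled singular data converge, the corresponding orbital measures converge weakly to an ergodic $\UxU$-invariant probability measure on $\Mat$; since here only singular values intervene (there are no negative eigenvalues to keep track of), the situation is in fact simpler than in the Hermitian case.

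First I would record the consequences of radial boundedness. Let $z\in\Mat$ be radially bounded, fix $C>0$ with
$$
\tr\bigl((\Pi_{[1,n]}z)^{*}(\Pi_{[1,n]}z)\bigr)\leqslant Cn^{2}\qquad(n\in\mathbb N),
$$
write $z(n)=\Pi_{[1,n]}z$, let $\sigma_{1}^{(n)}\geqslant\cdots\geqslant\sigma_{n}^{(n)}\geqslant 0$ be the singular values of $z(n)$, and put
$$
x_{i}^{(n)}=\frac{\sigma_{i}^{(n)}}{n},\qquad \gamma^{(n)}=\frac{\tr\bigl(z(n)^{*}z(n)\bigr)}{n^{2}}.
$$
Since $\tr\bigl(z(n)^{*}z(n)\bigr)=\sum_{i=1}^{n}\bigl(\sigma_{i}^{(n)}\bigr)^{2}$, radial boundedness gives $0\leqslant\gamma^{(n)}=\sum_{i=1}^{n}\bigl(x_{i}^{(n)}\bigr)^{2}\leqslant C$ for every $n$, and in particular $0\leqslant x_{i}^{(n)}\leqslant\sqrt{C}$ for all $i$ and all $n$.

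Next I would carry out a diagonal extraction: any infinite set of natural numbers contains a subsequence $n_{r}$ along which $\gamma^{(n_{r})}$ and each of the sequences $x_{i}^{(n_{r})}$, $i=1,2,\ldots$, converge to finite limits as $r\to\infty$. By the Rabaoui analogue of the Olshanski--Vershik Theorem, the orbital measures $\mu_{n_{r}}^{z}$ then converge weakly as $r\to\infty$ --- the bound $\gamma^{(n)}\leqslant C$ being precisely what prevents mass from escaping to infinity, so that the limit is an ergodic $\UxU$-invariant probability measure. Since every infinite subset of $\mathbb N$ admits such a subsequence, the full sequence $\left\{\mu_{n}^{z}\right\}_{n\in\mathbb N}$ is weakly precompact in $\mathfrak M(\Mat)$; and, as already observed, weak precompactness of a family of probability measures implies weak recurrence, so $z$ is weakly recurrent.

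The one delicate point I would have to check against \cite{rabaoui1}, \cite{rabaoui2} is that their convergence criterion is phrased in terms of exactly these parameters --- the rescaled singular values $x_{i}^{(n)}$ together with the single scalar $\gamma^{(n)}$ --- and that, just as in the Hermitian case, the convergence statement, although formulated there for the full sequence of orbital measures, remains valid verbatim along an arbitrary infinite subsequence. Granting this, the argument is complete, and the ``in particular'' clause follows as noted.
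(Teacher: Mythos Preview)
Your argument is correct and follows essentially the same route as the paper: bound the rescaled singular data using radial boundedness, extract a diagonal subsequence along which all the parameters converge, and invoke Rabaoui's theorem to obtain weak convergence of the orbital measures along that subsequence. The only cosmetic difference is that the paper parametrizes by the eigenvalues of $z(n)^{*}z(n)$ rescaled by $n^{2}$ rather than the singular values rescaled by $n$, which amounts to the same thing.
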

{\bf Remark.} As before, the converse statement also holds: if the sequence of orbital measures is weakly precompact, 
then $z$ is radially bounded.

\textbf{Proof.} This, again, follows from Rabaoui's work \cite{rabaoui1}, \cite{rabaoui2}. Indeed, let $z\in\Mat$, let
$$
z(n)=\Pi_{[1,n]}z,
$$
let
$$\lambda_1^{(n)}\geqslant\dots\geqslant\lambda_n^{(n)}\geqslant 0$$
be the eigenvalues of the matrix $(z(n))^{*}z(n)$ arranged in decreasing order, and set
$$
x_i^{(n)}=\frac{\lambda_i^{(n)}}{n^2},
\ \ \gamma^{(n)}=\frac{\tr\left(z(n)^{*}z(n)\right)}{n^2}.
$$
If $z$ is radially bounded, then any infinite set of natural numbers contains a subsequence $n_r$ such that the
sequence $\gamma^{(n^r)}$ as well as all the sequences $x_i^{n_r}$, $i=1,\dots,$ converge (to a finite limit) as
$r\to\infty$. In this case, by Rabaoui's theorem \cite{rabaoui1}, \cite{rabaoui2}, the sequence of orbital measures $\mu_{n_r}^z$
weakly converges to a probability measure as $r\to\infty$; weak precompactness is thus established.

To conclude the proof of the Theorem, it therefore remains to establish the following
\begin{proposition}
\label{radbddz} Let $m\in\mathbb{N}$ and let $\nu\in\classFNC$. Then $\nu$-almost every $z\in\Mat$ is radially
bounded.
\end{proposition}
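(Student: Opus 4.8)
\subsection{A proof proposal for Proposition \ref{radbddz}}

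The plan is to mirror, in the simpler setting of $\Mat$, the argument already carried out for $H$ in Corollary~\ref{projradbdd} together with Propositions~\ref{uinv} and \ref{radest}; the simplification is that radial boundedness of $z\in\Mat$ amounts to the single condition $\sup_{n}n^{-2}\sum_{i,j=1}^n|z_{ij}|^2<+\infty$, with no separate trace term to track. First I would reduce the statement to showing that for $\nu$-almost every $z$ the matrix $\Pi_{[m,\infty)}z$ is radially bounded. Take the canonical decomposition \eqref{ergdecz} of $\nu$ into conditional measures $\nu_{z^{(m)}}$ over $\Pi_{[1,m]}\colon\Mat\to{\rm Mat}(m,\mathbb C)$, and let $U_m(\infty)\subset U(\infty)$ be the subgroup of matrices equal to the identity on the first $m$ coordinates, exactly as in the proof of Proposition~\ref{uinv}. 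The two linear-algebra observations are: (i)~for $(u_1,u_2)\in U_m(\infty)\times U_m(\infty)$ the map $z\mapsto u_1zu_2^{-1}$ preserves every fibre $\Pi_{[1,m]}^{-1}(z^{(m)})$, since it leaves the top-left $m\times m$ corner of $z$ unchanged; and (ii)~$\Pi_{[m,\infty)}(u_1zu_2^{-1})=(\Pi_{[m,\infty)}u_1)(\Pi_{[m,\infty)}z)(\Pi_{[m,\infty)}u_2)^{-1}$, with $\Pi_{[m,\infty)}$ carrying $U_m(\infty)$ isomorphically onto a copy of $U(\infty)$. Choosing a countable subgroup $\Gamma\subset U_m(\infty)\times U_m(\infty)$ such that $\Gamma$-invariance of a Borel probability measure on $\Mat$ forces $\bigl(U_m(\infty)\times U_m(\infty)\bigr)$-invariance, and invoking uniqueness of Rohlin's system of conditional measures together with the $\UxU$-invariance of $\nu$, I would conclude that for $\overline\nu$-almost every $z^{(m)}$ the probability measure $\nu_{z^{(m)}}$ is $\bigl(U_m(\infty)\times U_m(\infty)\bigr)$-invariant, hence $(\Pi_{[m,\infty)})_*\nu_{z^{(m)}}$ is invariant under the corresponding copy of $\UxU$. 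This analogue of Proposition~\ref{uinv} is the step I expect to be the main obstacle: the book-keeping with the two-sided action, and checking that $\Pi_{[m,\infty)}$ intertwines it with the two-sided action on the tail, is where the care is needed.

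Next, each $(\Pi_{[m,\infty)})_*\nu_{z^{(m)}}$ is a $\UxU$-invariant \emph{probability} measure on $\Mat$, so the $\Mat$-analogue of Proposition~\ref{finradbdd} --- which follows in exactly the same way from Rabaoui's theorem \cite{rabaoui1}, \cite{rabaoui2} and the ergodic decomposition theorem, using the converse direction recorded in the Remark after Proposition~\ref{radbddprecompz} --- shows that $(\Pi_{[m,\infty)})_*\nu_{z^{(m)}}$-almost every matrix is radially bounded. Pulling this back through $\Pi_{[m,\infty)}$ and integrating over $z^{(m)}$ against $\overline\nu$ gives the analogue of Corollary~\ref{projradbdd}: $\Pi_{[m,\infty)}z$ is radially bounded for $\nu$-almost every $z$. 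Applying the same conclusion to the push-forward of $\nu$ under $z\mapsto\check u^{-1}z\check u$, which is the element $T_{(\check u^{-1},\check u^{-1})}$ of $\UxU$ and hence preserves $\nu$, I get that $\Pi_{[m,\infty)}(\check u^{-1}z\check u)$ is radially bounded for $\nu$-almost every $z$ as well.

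Finally I would establish the $\Mat$-analogue of Proposition~\ref{radest}: if $\Pi_{[m,\infty)}z$ and $\Pi_{[m,\infty)}(\check u^{-1}z\check u)$ are both radially bounded then so is $z$. Since radial boundedness in $\Mat$ is merely boundedness of $n^{-2}\sum_{i,j=1}^n|z_{ij}|^2$, it suffices to prove, for every $N\in\mathbb N$,
\[
\sum_{i,j=1}^N|z_{ij}|^2\;\le\;\sum_{i,j=m+1}^N|z_{ij}|^2+\sum_{i,j=m+1}^N\bigl|(\check u^{-1}z\check u)_{ij}\bigr|^2+\sum_{i,j=1}^{2m}|z_{ij}|^2,
\]
which in turn follows from the fact that the index square $[1,N]^2$ is covered by $[m+1,N]^2$, by $\bigl([1,m]\cup[2m+1,N]\bigr)^2$ (the image of $[m+1,N]^2$ under the permutation $\pi$ underlying $\check u$, for which $(\check u^{-1}z\check u)_{ij}=z_{\pi(i)\pi(j)}$), and by $[1,2m]^2$; all summands being nonnegative, the displayed inequality is immediate. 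The first two terms on the right are $O(N^2)$ by radial boundedness of the two tails, and the third is constant in $N$, so $z$ is radially bounded. Combining this with the two previous paragraphs proves Proposition~\ref{radbddz}, and hence, via Propositions~\ref{weakrecimplfin} and \ref{radbddprecompz}, completes the proof of Theorem~\ref{mainz}.
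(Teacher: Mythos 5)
Your proposal is correct and follows essentially the same route as the paper: the conditional decomposition over $\Pi_{[1,m]}$, invariance of the projected conditional measures under the copy of $\UxU$ acting on the tail (the paper's Proposition~\ref{zinv}, proved there by the same Rohlin-uniqueness argument as Proposition~\ref{uinv}), radial boundedness almost surely for invariant probability measures (Proposition~\ref{radbddzfin}), and finally the conjugation by $\check u$ together with the covering inequality for $\sum_{i,j\le N}|z_{ij}|^2$. Your write-up merely makes explicit the two-sided bookkeeping and the small-$N$ case that the paper leaves implicit.
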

The proof  follows the same pattern as that of Proposition \ref{radbdd}. Again, using Pickrell's classification of
ergodic probability measures as well as the ergodic decomposition theorem of \cite{Bufetov}, we have
\begin{proposition}
\label{radbddzfin} Let $\nu$ be a $U(\infty)\times U(\infty)$-invariant probability measure on $\Mat$. Then
$\nu$-almost every $z\in\Mat$ is radially bounded.
\end{proposition}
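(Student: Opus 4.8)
The plan is to follow exactly the pattern of the proof of Proposition \ref{finradbdd} in the Hermitian case, reducing the general statement to the ergodic case by means of the ergodic decomposition. The two ingredients I would use are Rabaoui's theorem \cite{rabaoui1}, \cite{rabaoui2} --- the $\Mat$-analogue of Theorem 4.1 of Olshanski-Vershik \cite{OV}, governing the asymptotics of orbital measures --- together with Pickrell's classification \cite{pickrell} of ergodic $\UxU$-invariant probability measures and the ergodic decomposition theorem of \cite{Bufetov} (equivalently, the Ergodic Decomposition Theorem of Borodin-Olshanski \cite{BO}).

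First I would settle the case in which $\nu$ is itself ergodic. For such a measure, Rabaoui's theorem asserts that for $\nu$-almost every $z\in\Mat$ the sequence of orbital measures $\mu_n^z$ converges weakly to $\nu$. A weakly convergent sequence is in particular weakly precompact, and by the converse statement recorded in the Remark following Proposition \ref{radbddprecompz} --- itself a consequence of Rabaoui's work and valid for any infinite subsequence --- weak precompactness of $\{\mu_n^z\}$ forces $z$ to be radially bounded. Thus, for ergodic $\nu$, the set of radially bounded matrices carries full $\nu$-measure.

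To treat an arbitrary $\UxU$-invariant probability measure $\nu$, I would apply the ergodic decomposition theorem, expressing $\nu$ as an integral of $\UxU$-invariant ergodic probability measures over the Borel set ${\mathfrak M}_{erg}(\Mat)$. Let $A\subset\Mat$ denote the collection of radially bounded matrices. This is a Borel set: it equals $\bigcup_{C\in{\mathbb N}}\bigcap_{n\in{\mathbb N}}\{z:\tr(\Pi_{[1,n]}z)^{*}(\Pi_{[1,n]}z)\le C n^2\}$, and each set in the intersection is closed, being defined by a continuous function of finitely many entries of $z$. By the previous step, $\eta(A)=1$ for $\tilde\nu$-almost every ergodic component $\eta$; integrating $\eta(A)$ against $\tilde\nu$ then yields $\nu(A)=1$ as well.

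I do not anticipate a genuine obstacle, since the argument merely assembles previously established results. The only points demanding care are the verification that radial boundedness defines a Borel (and in fact $\UxU$-invariant) subset of $\Mat$, so that the ergodic decomposition applies to it, and the confirmation that Rabaoui's theorem indeed supplies both the weak convergence of orbital measures in the ergodic case and the converse implication from weak precompactness to radial boundedness.
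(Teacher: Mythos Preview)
Your proposal is correct and follows essentially the same approach as the paper: reduce to the ergodic case via the ergodic decomposition theorem, and in the ergodic case invoke the Rabaoui/Pickrell theory (the $\Mat$ analogue of Olshanski--Vershik) to obtain that almost every point has weakly convergent orbital measures and hence is radially bounded. The paper's own proof is in fact the single sentence ``using Pickrell's classification of ergodic probability measures as well as the ergodic decomposition theorem of \cite{Bufetov}'', so your write-up supplies the expected details (including the Borel measurability check) without departing from the intended argument.
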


Given $\nu \in \mathfrak F(m, \Mat)$, we consider, again, the decomposition
\[
\nu \;=\; \int\limits_{{\rm Mat}(m, \mathbb C)} \nu_{z^{(m)}} \, d{\overline\nu} (z^{(m)}).
\]
Here ${\rm Mat}(m, \mathbb C)$ stands for the space of all $m\times m$-matrices with complex entries; the measure
$\overline\nu$ is the projection of $\nu$ onto ${\rm Mat}(m, \mathbb C)$ which is well-defined by definition of
the class $\mathfrak F(m, \Mat)$; and, for ${\overline \nu}$-almost every point $z^{(m)}\in {\rm Mat}(m, \mathbb
C)$ the measure $\nu_{z^{(m)}}$ is the canonical conditional probability measure given by Rohlin's Theorem. Again,
we have the following

\begin{proposition}
\label{zinv} If $\nu \in \mathfrak F(m, \Mat)$ is $U(\infty) \times U(\infty)$-invariant, then, for ${\overline
\nu}$-almost all $z^{(m)} \in {\rm Mat }(m, \mathbb C)$, the measure
\[
\textstyle \left( \Pi_{[m, \infty)} \right)_* \nu_{z^{(m)}}
\]
is also $U(\infty)\times U(\infty)$-invariant.
\end{proposition}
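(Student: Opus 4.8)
The plan is to reproduce, almost verbatim, the argument proving Proposition \ref{uinv}; the one genuinely new point is that the group now acts on $\Mat$ on two sides, $T_{(u_1,u_2)}z=u_1 z u_2^{-1}$, so the left (row) factor and the right (column) factor must be tracked separately.

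First I would set up the subgroup. Let $U_m(\infty)\subset U(\infty)$ be the subgroup used in the proof of Proposition \ref{uinv} (the matrices acting as the identity on the first $m$ coordinates), and consider $U_m(\infty)\times U_m(\infty)\subset\UxU$. Two facts have to be recorded. (i) If $(u_1,u_2)\in U_m(\infty)\times U_m(\infty)$, then $T_{(u_1,u_2)}$ preserves every fibre of $\Pi_{[1,m]}$: since $u_1$ fixes the first $m$ rows and $u_2^{-1}$ the first $m$ columns, one has $\Pi_{[1,m]}(u_1 z u_2^{-1})=\Pi_{[1,m]}z$ for all $z\in\Mat$. (ii) The map $(u_1,u_2)\mapsto(\Pi_{[m,\infty)}u_1,\Pi_{[m,\infty)}u_2)$ is a group isomorphism of $U_m(\infty)\times U_m(\infty)$ onto $\UxU$ (exactly as in the proof of Proposition \ref{uinv}), and it intertwines the restricted action with the action of $\UxU$ on $\Mat$:
$$
\Pi_{[m,\infty)}\bigl(u_1 z u_2^{-1}\bigr)\;=\;\bigl(\Pi_{[m,\infty)}u_1\bigr)\bigl(\Pi_{[m,\infty)}z\bigr)\bigl(\Pi_{[m,\infty)}u_2\bigr)^{-1}.
$$
Both (i) and (ii) are immediate from the fact that the matrices in $U_m(\infty)$ are block diagonal with an identity block on the indices $1,\dots,m$.

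Next I would invoke uniqueness of Rohlin's system of conditional measures, just as in Proposition \ref{uinv}. Choose a countable subgroup $\Gamma'\subset U_m(\infty)\times U_m(\infty)$ such that any Borel probability measure on $\Mat$ fixed by $T_g$ for every $g\in\Gamma'$ is automatically $(U_m(\infty)\times U_m(\infty))$-invariant (such a $\Gamma'$ exists by separability). Since $\nu$ is $\UxU$-invariant it is in particular $T_g$-invariant for every $g\in\Gamma'$, and every such $T_g$ preserves the fibres of $\Pi_{[1,m]}$; by uniqueness of conditional measures it follows that for $\overline\nu$-almost every $z^{(m)}\in{\rm Mat}(m,{\mathbb C})$ one has $(T_g)_*\nu_{z^{(m)}}=\nu_{z^{(m)}}$ for all $g\in\Gamma'$, hence for all $g\in U_m(\infty)\times U_m(\infty)$.

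Finally I would transfer this invariance across $\Pi_{[m,\infty)}$. Given a Borel set $A\subset\Mat$ and $(v_1,v_2)\in\UxU$, set $\widetilde A_{z^{(m)}}=\{z\in\Mat:\Pi_{[1,m]}z=z^{(m)},\ \Pi_{[m,\infty)}z\in A\}$ and let $(\widetilde v_1,\widetilde v_2)\in U_m(\infty)\times U_m(\infty)$ be the preimage of $(v_1,v_2)$ under the isomorphism of (ii). Combining (i) and (ii) gives $T_{(\widetilde v_1,\widetilde v_2)}(\widetilde A_{z^{(m)}})=\widetilde{(T_{(v_1,v_2)}A)}_{z^{(m)}}$, so that
$$
\bigl(\Pi_{[m,\infty)}\bigr)_*\nu_{z^{(m)}}(A)=\nu_{z^{(m)}}(\widetilde A_{z^{(m)}})=\nu_{z^{(m)}}\bigl(T_{(\widetilde v_1,\widetilde v_2)}\widetilde A_{z^{(m)}}\bigr)=\bigl(\Pi_{[m,\infty)}\bigr)_*\nu_{z^{(m)}}\bigl(T_{(v_1,v_2)}A\bigr);
$$
since $(v_1,v_2)$ is arbitrary, the pushforward $(\Pi_{[m,\infty)})_*\nu_{z^{(m)}}$ is $\UxU$-invariant. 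I expect no serious obstacle: the only place where the argument departs from the proof of Proposition \ref{uinv} is the verification of (i) and (ii), i.e.\ checking that the two-sided action restricts to $U_m(\infty)\times U_m(\infty)$ and intertwines correctly through $\Pi_{[m,\infty)}$; once these two formal identities are in hand the rest is identical to the Hermitian case.
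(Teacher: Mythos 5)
Your proposal is correct and follows essentially the same route as the paper, which proves Proposition \ref{zinv} by declaring its proof ``exactly the same'' as that of Proposition \ref{uinv}: the subgroup $U_m(\infty)\times U_m(\infty)$, the countable subgroup together with uniqueness of Rohlin's conditional measures, and the transfer of invariance through $\Pi_{[m,\infty)}$ are precisely the intended adaptation. Your explicit verification of the fibre-preservation and intertwining identities for the two-sided action is exactly the detail the paper leaves implicit.
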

\begin{proof}
The proof of this Proposition is exactly the same as that of Proposition \ref{uinv}.
\end{proof}

It follows from Proposition \ref{zinv} that for $\nu$-almost every $z$, the matrix $\Pi_{[m, \infty)}z$ is radially bounded. 
To obtain boundedness for the matrix $z$ itself, we again apply a permutation of rows and columns.

Denote
\[
\textstyle \tau_n (z) \;=\;\displaystyle \tr\left( \left( \Pi_{[1,n]}z \right)^* \Pi_{[1,n]}z
\right) \;=\;  \sum_{i,j=1}^n \left| z_{ij} \right|^2.
\]

Let the matrix $\check u\in U(\infty)$ be defined by (\ref{uhatdef}).

%As before, let $u(m)\in U(\infty)$ be defined as follows:
%\begin{align*}
%(u(m))_{i,i+m} = (u(m))_{i+m,i} \;=\;&1\\
%(u(m))_{2m+i,2m+i}\;=\; &1,  \quad i=1,\ldots,\\
%(u(m))_{ij} \;=\; &0, \quad \text{otherwise}.
%\end{align*}

The following clear inequality that holds for any $z \in \Mat$ and all  $n>3m$:
\[
\textstyle \tau_n (z) \;\le\; \tau_{2m}(z)+
\tau_n \left(\Pi_{[m, \infty)}z\right) + \tau_n \left(\Pi_{[m, \infty)}(
{\check u}^{-1} z{\check u}\right).
\]
Consequently, if $\nu \in \mathfrak F(m, \Mat)$ is $U(\infty) \times U(\infty)$-invariant, then 
 $\nu$-almost
every $z \in \Mat$ 
%the sequence of orbital measures
%\[
%\mu_{U(n)\times U(n)}^z, \quad n \in \mathbb N,
%\]
%is weakly precompact, 
is radially bounded, and
Theorem \ref{mainz} is proved completely.

\end{document}